\theoremstyle{plain}
\newtheorem{theorem}{Theorem}
\newtheorem{lemma}[theorem]{Lemma}
\newtheorem{corollary}[theorem]{Corollary}
\newtheorem{claim}{Claim}[theorem]
\theoremstyle{definition}
\newtheorem{problem}[theorem]{Problem}
\newtheorem{conjecture}[theorem]{Conjecture}
\def\pro{\noindent {\bf Proof. }}
\def\fpro{\hfill $\square$\\\vspace{2mm}}
\begin{document}

\title {\bf Colorful paths for 3-chromatic graphs}

\author{St\'ephane Bessy and Nicolas Bousquet\\~\\ Universit\'e
  Montpellier 2 - CNRS, LIRMM,\\ 161 rue Ada, 34392 Montpellier Cedex 5,
  France,\\ {\tt bessy,bousquet@lirmm.fr}} 
 \date{}

\maketitle

\begin{abstract}
In this paper, we prove that every 3-chromatic connected graph, except
$C_7$, admits a 3-vertex coloring in which every vertex is the
beginning of a 3-chromatic path. It is a special case of a conjecture
due to S.~Akbari, F.~Khaghanpoor, and S.~Moazzeni, cited in
[P.J. Cameron, Research problems from the BCC22, \emph{Discrete Math.}
  {\bf 311} (2011), 1074--1083], stating that every connected graph
$G$ other than $C_7$ admits a $\chi (G)$-coloring such that every
vertex of $G$ is the beginning of a colorful path (i.e. a path of on
$\chi(G)$ vertices containing a vertex of each color). We also provide
some support for the conjecture in the case of 4-chromatic graphs.
\end{abstract}

~

\noindent {\it Keywords:} vertex coloring, colorful path, rainbow coloring

~

\section{Introduction}

In this paper, we deal with oriented and non-oriented graphs. When it
is not specified, graphs are supposed to be non-oriented.  Notations
not given here are consistent with~\cite{BM08}.  The vertex set of a
graph or an oriented graph $G$ is denoted by $V(G)$ and its edge set
(or arc set) by $E(G)$\footnote{Throughout the paper, we use the
  notation $xy$ to indicate the (oriented) arc from $x$ to $y$, while
  $\{x,y\}$ designates the (non-oriented) edge between $x$ and $y$.}
Classically, for a vertex $x$ of a graph $G$, a vertex $y$ with
$\{x,y\}\in E(G)$ is called a \emph{neighbour} of $x$.  The set of all
the neighbours of $x$, denoted by $N_G(x)$, is the
\emph{neighbourhood} of $x$ in $G$. In the oriented case, an
\emph{out-neighbour} (resp. \emph{in-neighbour}) of a vertex $x$ of an
oriented graph $G$ is a vertex $y$ with $xy\in E(G)$ (resp. $yx\in
E(G)$). Similarly, the set of all the out-neighbours
(resp. in-neighbours) of $x$ in $G$, denoted by $N^+_G(x)$
(resp. $N^-_G(x)$) is the \emph{out-neighbourhood}
(resp. \emph{out-neighbourhood}) of $x$ in $G$.\\ In a graph $G$, we
denote by $x_1\dots x_{\ell+1}$ the \emph{path} of length $\ell$ on
the distinct vertices $\{x_1,\dots ,x_{\ell+1}\}$ with edges
$\{x_1,x_2\}, \{x_2,x_3\}, \dots , \{x_\ell,x_{\ell+1}\}$. We denote
also by $x_1 \ldots x_\ell x_1$ the \emph{cycle} $C_{\ell}$ of length
$\ell$ on the distinct vertices $\{x_1, \ldots ,x_{\ell}\}$ with edges
 $\{x_1,x_2\}, \ldots ,\{x_{\ell-1},x_{\ell}\},
\{x_{\ell},x_1\}$. Classically, these notions are extended to oriented
graphs, where the arcs $x_ix_{i+1}$ replace the edges
$\{x_i,x_{i+1}\}$ (computed modulo $\ell$ for the oriented cycle
$C_\ell$).\\
 
A \emph{$k$-(proper) coloring} of a graph $G$ is a mapping $c:
V(G)\rightarrow \{1,\dots ,k\}$ such that $c(u)\neq c(v)$ if $u$ and
$v$ are adjacent in $G$.  The \emph{chromatic number} of $G$, denoted
by $\chi (G)$, is the smallest integer $k$ for which $G$ admits a
$k$-coloring and thus, we say that $G$ is a \emph{$\chi (G)$-chromatic
  graph}. For a $k$-coloring of a graph $G$, a \emph{rainbow path} of
$G$ is a path whose vertices have all distinct
colors. Given a $\chi (G)$-coloring of $G$, a rainbow
  path on $\chi (G)$ vertices is a \emph{colorful path}. In particular
  a rainbow path is transversal to the set of colors (\emph{i.e.} it has a non
  empty intersection with every color class). Finding structures
  transversal to a partition of the ground set is a general problem
  in combinatorics. Examples arise from Steiner Triple
  Systems (see~\cite{FuSi05}), systems of representatives
  (see~\cite{ABZ07}) or extremal graph theory (see~\cite{KeMy13}).  
  Rainbow and colorful paths have been extensively studied in the last few years,
  see for instance~\cite{AKM}, \cite{ALN11}, \cite{Fung89}, \cite{Li01}
  and~\cite{Lin07}.  In this paper, we concentrate on a conjecture of
  S.~Akbari, F.~Khaghanpoor and S.~Moazzeni raised in~\cite{AKM}
  (also cited in~\cite{BCC22}).

\begin{conjecture}[S.~Akbari, F.~Khaghanpoor and S.~Moazzeni~\cite{AKM}]
\label{conj:colorful}
Every connected graph $G$ other than $C_7$ admits a $\chi (G)$-coloring
such that every vertex of $G$ is the beginning of a colorful path.
\end{conjecture}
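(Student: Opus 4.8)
The plan is to argue by induction on $k := \chi(G)$, treating the required object as two pieces of data: a proper $k$-coloring $c$ of $G$, and, for each vertex $v$, a rainbow path of $G$ on $k$ vertices having $v$ as one of its endpoints. The base cases are small. For $k=1$ the graph is a single vertex; for $k=2$ the graph is connected and bipartite on at least two vertices, so every vertex has a neighbour of the opposite color and the edge through it is colorful; and $k=3$ is exactly the main theorem of this paper, with $C_7$ appearing as the sole exception. The aim of the induction is then to show that for $k\ge 4$ no further exceptions arise.

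For intuition, and as a warm-up that already settles part of the statement, I would first record the orientation viewpoint. By the theorem of Gallai and Roy one may orient $G$ acyclically so that the level function $\ell(v)$ (the number of vertices on a longest directed path ending at $v$) is a proper $k$-coloring taking all values $1,\dots,k$. Following in-arcs downward from any vertex $v$ produces a rainbow path on the colors $\ell(v),\ell(v)-1,\dots,1$ with $v$ as its top endpoint, and when $\ell(v)=k$ this path is colorful. Thus every top-level vertex is handled immediately, but a vertex at level $i<k$ reaches only the $i$ lowest colors along monotone descents, and a monotone ascent from it reaches only the $k-i+1$ highest colors, so neither direction alone is colorful. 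Since $v$ must be an \emph{endpoint} of its path, it cannot simultaneously descend and ascend, and one is forced to use genuinely non-monotone rainbow paths; controlling such zig-zagging paths uniformly over all vertices is the source of the difficulty.

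The core of the induction would be a color-augmentation lemma allowing passage from $k-1$ colors to $k$. Concretely, I would delete a carefully chosen color class $S$ (for instance an inclusion-minimal color class, or an independent set whose removal keeps the chromatic number as high as possible), apply the induction hypothesis to each connected component of $G-S$, and then reinsert $S$ as the new color $k$. This yields, for every $w\notin S$, a colorful $(k-1)$-path inside its component; the plan is to prepend or append a neighbour in $S$ so that the extended $k$-vertex path becomes colorful for $G$, while for each $s\in S$ one builds a colorful path directly out of its neighbourhood, which by the choice of $S$ meets several color classes. Keeping the coloring proper is automatic because $S$ is independent; the real work is the rerouting that guarantees every extension stays a \emph{simple} rainbow path and that the two tasks—extending old paths and initiating new ones at the vertices of $S$—can be carried out under a single consistent coloring.

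The step I expect to be the main obstacle is exactly this gluing. Deleting a color class can disconnect $G$ and can drop the chromatic number of some components below $k-1$, so the induction hypothesis must be applied in a form robust to both phenomena, and the local colorings of the pieces must be reconciled across $S$ without creating monochromatic conflicts or forcing a path extension to revisit a color. Moreover the $C_7$ phenomenon strongly suggests that sporadic small configurations may obstruct the construction for each $k$, so a successful proof would have to isolate and dispose of a finite list of exceptional local patterns by direct case analysis—precisely the kind of analysis carried out here only for $k=3$, and for which the $4$-chromatic case is, at present, supported rather than fully resolved.
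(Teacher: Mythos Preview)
The statement you are attempting to prove is Conjecture~\ref{conj:colorful}, which the paper does \emph{not} prove: it is presented as an open problem, and the paper establishes only the case $\chi(G)=3$ (Theorem~\ref{theo:3colorful}) together with partial evidence for $\chi(G)=4$ (Lemma~\ref{lem:C4}). There is therefore no proof in the paper against which your proposal can be compared.

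Your proposal is not a proof but a programme, and you are candid about this in your final paragraph. The induction step you outline has concrete failure modes that you do not resolve. Deleting a color class $S$ from a $k$-chromatic graph $G$ yields a graph $G-S$ whose components may have chromatic number strictly less than $k-1$; the inductive hypothesis then produces, for a vertex $w$ in such a component, a colorful path on fewer than $k-1$ vertices, and appending a single vertex of $S$ cannot reach all $k$ colors. Even when a component is $(k-1)$-chromatic, the colorful path you obtain for $w$ has $w$ at one end and some other vertex $u$ at the other; to extend it to a colorful $k$-path you need a neighbour of $u$ (not of $w$) in $S$ that is distinct from every vertex already on the path, and nothing in your construction guarantees this. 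Finally, for $k=4$ a component of $G-S$ may be isomorphic to $C_7$, the sole exception in your $k=3$ base case, so the induction hypothesis does not even apply there. These are exactly the obstructions that keep the conjecture open, and your sketch does not overcome any of them.
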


Conjecture~\ref{conj:colorful} holds for $1$-chromatic graphs and
$2$-chromatic graphs.  Indeed in connected bipartite graphs, every
vertex is connected to a vertex of another color.  The classical proof
of Gallai-Roy Theorem also shows that in any $\chi(G)$-coloring of a
graph $G$, there exists at least one colorful path (see~\cite{BM08},
for instance). Furthermore, much more is known concerning this
conjecture which, through recent, have already received attention.
In~\cite{ALN11}, S.~Akbari, V.~Liaghat, and A.~Nikzad proved that
Conjecture~\ref{conj:colorful} is true for the graphs $G$ having a
complete subgraph of size $\chi (G)$.  They also proved that every
graph $G$ admits a $\chi (G)$-coloring such that every vertex is the
beginning of a rainbow path on $\lfloor \frac{\chi (G)}{2} \rfloor$
vertices. This result was improved by M. Alishahi, A. Taherkhani and
C. Thomassen in~\cite{ATT11}, who showed that we can obtain rainbow
paths on $\chi(G)-1$ vertices.

In this paper, we give another evidence for
Conjecture~\ref{conj:colorful}, and prove it for $3$-chromatic graphs.

\begin{theorem}
\label{theo:3colorful}
Every connected 3-chromatic graph $G$ other than $C_7$ admits a
3-coloring such that every vertex of $G$ is the beginning of a
colorful path.
\end{theorem}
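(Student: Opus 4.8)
The plan is to work with a proper $3$-coloring $c$ of $G$ with color classes $V_1,V_2,V_3$, and to locally recolor vertices to fix the vertices that fail to start a colorful path. Call a vertex $x$ \emph{good} (for a given $3$-coloring) if it is the beginning of a colorful path, i.e.\ if from $x$ one can reach, in two steps along properly colored edges, a vertex of the third color; otherwise $x$ is \emph{bad}. Note $x$ is bad exactly when, writing $c(x)=i$, \emph{every} neighbour $y$ of $x$ has all of its neighbours colored $i$ or $c(y)$ — equivalently, the two color classes other than $i$ are ``separated'' in the second neighbourhood of $x$. The first step is to record this local characterization and, in particular, to observe that a vertex of degree $1$ is automatically good (its neighbour lies in a connected graph with at least three colors, so has a neighbour of the third color unless $G$ is tiny, which is handled separately), and more generally that having two neighbours in distinct classes $\ne i$ makes $x$ good.

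The second step is a discharging/recoloring argument on the set $B$ of bad vertices. I would first handle the base cases and degenerate structures by hand: graphs of small order, and the situation where $G$ contains an induced $C_7$ or is very sparse (trees, unicyclic graphs), since the exceptional role of $C_7$ must show up precisely here — in $C_7$ with its unique $3$-coloring, one vertex is forced to be bad, and one must check no other $3$-coloring helps. For the main case, given a bad vertex $x$ with $c(x)=i$, I would try to recolor $x$ itself: if $x$ has a neighbour in only one class $j\ne i$, recolor $x$ with the third color $k$; the bad condition at $x$ guaranteed strong structure on $N(x)$ and $N(N(x))$, and I would argue this recoloring makes $x$ good while controlling which of its neighbours might become bad. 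If $x$ has no legal recolor, then $N(x)$ meets both other classes, and I would instead push a colorful path outward, using connectivity to find, beyond the second neighbourhood, a vertex whose recoloring propagates a colorful path back to $x$.

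The heart of the argument — and the step I expect to be the main obstacle — is showing that these local recolorings can be performed \emph{simultaneously}, or in some order, without creating new bad vertices in an uncontrolled way: a cascade. The natural way to organize this is to choose, among all proper $3$-colorings of $G$, one minimizing $|B|$ (the number of bad vertices), together with a secondary potential such as $\sum_{x\in B}\deg(x)$ or the number of monochromatic-in-the-neighbourhood patterns; then a single bad vertex in an optimal coloring must be surrounded by a rigid local configuration, because every candidate recoloring that fixes $x$ must create at least one new bad vertex $y$, and one then analyzes the edge $\{x,y\}$ and the forced colors around it. I expect this forces $G$ (or a connected ``bad'' component of it) to be a cycle, at which point a direct case analysis on $C_n \bmod 3$ isolates $C_7$ as the only obstruction: for $n\not\equiv 1 \pmod 3$ one exhibits an explicit good $3$-coloring of $C_n$, for $n\equiv 1\pmod 3$ with $n\ge 10$ one uses a coloring with one ``defect'' block that still leaves every vertex good, and $n=4$ is bipartite, leaving $n=7$ genuinely exceptional. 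Assembling these pieces — local characterization, extremal choice of coloring, reduction to cycles, and the cycle computation — yields the theorem.
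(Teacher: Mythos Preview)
Your proposal has a genuine gap at its core: the step where you ``expect'' that an extremal coloring forces $G$ to be a cycle is essentially the entire content of the theorem, and you give no mechanism for it. You plan to minimize $|B|$ and analyze what happens when a single local recoloring fails to reduce it, but recoloring one vertex can change the good/bad status of many vertices at distance $2$, and you propose no potential that provably decreases. Worse, two of your ``local characterizations'' are false as stated: a degree-$1$ vertex $x$ is \emph{not} automatically good (its unique neighbour may have all its other neighbours in the same class as $x$), and a vertex with neighbours in both other colour classes can still be bad (each such neighbour may see only the colour of $x$ among its own neighbours). So the discharging/cascade analysis you sketch never gets off the ground, and the sentence ``I expect this forces $G$ \ldots\ to be a cycle'' is precisely the missing idea.

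The paper's argument is organized very differently, and the organizing device is what makes it work. One passes to the oriented graph $D_c$ (arc $ab$ whenever $c(b)=c(a)+1\bmod 3$); if $D_c$ contains a directed cycle one is done immediately, so one may take $D_c$ acyclic with a unique sink (a \emph{nice} coloring) and look at its level partition $(V^c_1,\dots,V^c_{h(c)})$. The extremal choice is not minimal $|B|$ but \emph{maximal height} $h(c)$ among nice colorings. One shows $B_c\subseteq V^c_2$ and introduces, for $b\in B_c$, the \emph{switch recoloring}: the initial recoloring of $\{b\}\cup N^-_{D_c}(b)$. This is analyzed exactly and shown to yield another nice coloring whose level partition is, up to the position of $N^-_{D_c}(b)$, a shift of the old one; maximality of $h(c)$ then forces $V^c_{h(c)}\subseteq N^-_{D_c}(b)$, and further forces complete bipartite structure between $V^c_{h(c)}$ and each of $B_c$, $V^c_{h(c)-1}$, and finally $N^-_{D_c}(b)=V^c_{h(c)}$. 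Combined with a no-twins reduction (minimal counterexample), this makes $|B_c|=1$, and iterating the switch rotates the level partition cyclically, forcing every level to be a singleton with neighbours only in adjacent levels --- so $G$ is a cycle, handled by an explicit coloring. The point your proposal lacks is exactly this controlled \emph{global} recoloring (initial sections / switch) that moves the whole level structure rigidly; ad hoc single-vertex recolorings do not give that control.
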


The proof of Theorem~\ref{theo:3colorful} uses an auxiliary oriented
graph build from a coloring of the instance graph.  This oriented
graph was already used in~\cite{ALN11}. In the next section, we recall
its definition and strengthen the results known about it to obtain
some useful lemmas. In Section~\ref{sec:proof}, we use these tools to
derive the proof of Theorem~\ref{theo:3colorful}.  Finally, in
Section~\ref{sec:conclusion}, we conclude the paper with some remarks
and open questions.  In particular, we prove that
Conjecture~\ref{conj:colorful} is true for 4-chromatic graphs
containing a cycle of length four.

\section{Preliminaries}

In this section, $G=(V,E)$ is a connected graph and $c$ is a proper
coloring of $G$ with $\chi (G)$ colors. Here, $G$ is not
necessarily 3-chromatic and, for short, we write $\chi$ instead of
$\chi(G)$.  In the following, we will consider modifications of
colors and all these modifications have to be understood modulo
$\chi$.\\ As defined in~\cite{ALN11}, the oriented graph $D_c$ has
vertex set $V$ and $ab$ is an arc of $D_c$ if $\{a,b\}$ is an edge of
$G$ and the color of $b$ equals the color of $a$ plus one 
(this oriented graph was first introduced in~\cite{Guichard, Zhu01}).
A colorful path starting at the vertex $x$ is called a \emph{certifying
  path for $x$}. A colorful path $x_1\dots  x_\chi$ is \emph{forward}
(resp. \emph{backward}) if for every $i\in \{1,\dots ,\chi-1\}$ we have
$c(x_{i+1})=c(x_i)+1 \mod \chi$ (resp.  $c(x_{i+1})=c(x_i)-1 \mod
\chi$). Note that a forward (resp. backward) certifying path for a
vertex $x$ is an oriented path in $D_c$ on $\chi$ vertices starting
(resp. ending) at $x$.

An \emph{initial section} of $D_c$ is a subset $X$ of $V$ such that
there is no arc of $D_c$ entering into $X$ (i.e. from $V(G)\setminus
X$ to $X$). The \emph{initial recoloring} of $X$ consists of
reducing the color used on each vertex in $X$ by one. We have the following
basic facts (which are mentioned in~\cite{ALN11}, but we recall here
their short proofs for the sake of completeness).

\begin{lemma}[S.~Akbari et al.~\cite{ALN11}]
\label{lem:initialrecoloring}
An initial recoloring of an initial section is still a proper coloring.
\end{lemma}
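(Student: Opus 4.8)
The plan is to verify directly that after the initial recoloring no edge of $G$ becomes monochromatic. Let $X$ be an initial section of $D_c$, and let $c'$ be the coloring obtained by setting $c'(v) = c(v) - 1 \bmod \chi$ for $v \in X$ and $c'(v) = c(v)$ for $v \notin X$. Take any edge $\{a,b\}$ of $G$; I want $c'(a) \neq c'(b)$. There are three cases according to how $\{a,b\}$ meets $X$.

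First, if both $a,b \in X$, then $c'(a) = c(a)-1$ and $c'(b) = c(b)-1$, and since $c(a) \neq c(b)$ we get $c'(a) \neq c'(b)$ (subtracting $1$ modulo $\chi$ is a bijection). Second, if neither $a$ nor $b$ is in $X$, then $c'(a) = c(a) \neq c(b) = c'(b)$. The only interesting case is when exactly one endpoint, say $a$, lies in $X$ and $b \notin X$. Then $c'(a) = c(a) - 1$ and $c'(b) = c(b)$, so I must rule out $c(b) = c(a) - 1$, i.e. $c(a) = c(b) + 1$. But $c(a) = c(b)+1 \bmod \chi$ together with $\{a,b\} \in E(G)$ means exactly that $ba$ is an arc of $D_c$; since $a \in X$ and $b \notin X$, this would be an arc entering $X$, contradicting the assumption that $X$ is an initial section. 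Hence $c'(a) \neq c'(b)$ in this case as well.

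Since every edge of $G$ remains properly colored and $c'$ still uses only colors in $\{1,\dots,\chi\}$ (the values are taken modulo $\chi$), $c'$ is a proper $\chi$-coloring of $G$, which is what we wanted. The one point to be a little careful about is the modular arithmetic — writing colors as elements of $\mathbb{Z}/\chi\mathbb{Z}$ makes "subtract one" a bijection and makes the identities $c(a) = c(b)+1$ and $c(b) = c(a)-1$ literally the same statement, so no edge-case with the representative $\{1,\dots,\chi\}$ versus $\{0,\dots,\chi-1\}$ arises. There is really no substantial obstacle here; the content is entirely in recognizing that a newly-created monochromatic edge would correspond precisely to an arc of $D_c$ entering $X$.
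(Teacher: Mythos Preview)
Your proof is correct and follows essentially the same approach as the paper: a three-case analysis on how an edge meets $X$, with the only nontrivial case handled by observing that a monochromatic edge under $c'$ would correspond to an arc of $D_c$ entering $X$. The added remark about working in $\mathbb{Z}/\chi\mathbb{Z}$ is a reasonable clarification but not a substantive difference.
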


\pro Let $c$ be a coloring of $G$ and $X$ an initial section of
$D_c$.  We denote by $c'$ the coloring of $G$ obtained after the
initial recoloring of $X$. Let $x$ and $y$ be two adjacent vertices.
If both $x$ and $y$ are not in $X$, we have $c'(x)=c(x)\neq
c(y)=c'(y)$.  If both $x$ and $y$ are in $X$, we have $c'(x)=c(x)-1
\neq c(y)-1 = c'(y)$.  So, by symmetry we may assume that $x \notin X$
and $y \in X$.  Since $X$ is an initial section, there is no arc from
$x$ to $y$ in $D_c$ and then we have $c(x) \neq c(y)-1$. Thus we have
$c'(x)=c(x) \neq c(y)-1 = c'(y)$.  \fpro

\noindent
We will intensively use Lemma~\ref{lem:initialrecoloring} to prove
Theorem~\ref{theo:3colorful}, and so, without refereeing it
precisely.  Notice that when performing an initial recoloring on an
initial section $X$, we remove from $D_c$ all the arcs leaving $X$ and
possibly add some arcs entering into $X$ (the arcs $xy$ with
$\{x,y\}\in E(G)$, $x\notin X$, $y \in X$ and $c(x)=c(y)-2$). 
Moreover, we do not create any arc leaving $X$. Indeed suppose by contradiction
that an arc $xy$ is created with $x \in X$ and $y \notin X$,
then in the original coloring $c$, we must have $c(x)=c(y)$, 
contradicting $c$ being proper.
The other arcs, standing inside or outside $X$ remain unchanged.

Similarly, a subset $X$ of vertices is a \emph{terminal section} of
$D_c$ if there is no arc leaving $X$ (i.e. from $X$ to $V(G)\setminus
X$). The \emph{terminal recoloring} of $X$ consists in adding one to
the color of the vertices of $X$. As for the initial recoloring,
this coloring is still proper. Note also that, when performing a
terminal recoloring of $X$, we remove from $D_c$ all the arcs entering
into $X$ and possibly add some arcs leaving $X$ (the arcs $xy$ with
$\{x,y\}\in E(G)$, $x\in X$, $y \notin X$ and $c(x)=c(y)-2$).\\ Using
initial and terminal recolorings, we prove some basic facts on the
existence of colorful paths.  Two colorings $c$ and $c'$ are
\emph{identical} on $X$ if $c(x)=c'(x)$ for all $x \in X$.

\begin{lemma}[S.~Akbari et al.~\cite{ALN11}]
\label{lem:reachable-set}
Let $c$ be a $\chi$-coloring of $G$ and $X$ be a subset of vertices
of $G$.  There exists a $\chi$-coloring $c'$ of $G$ identical to $c$
on $X$ such that every vertex is the beginning of an oriented path of
$D_{c'}$ which ends in $X$.
\end{lemma}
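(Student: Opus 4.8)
The plan is to build the coloring $c'$ by a greedy/iterative process that repeatedly performs terminal recolorings on carefully chosen terminal sections, each step enlarging the set of vertices that can already reach $X$ by an oriented path of the current $D$, while never touching a vertex of $X$ itself. Concretely, I would maintain a set $R$ (the vertices that currently reach $X$ in $D_{c''}$ for the current coloring $c''$), starting with $R=X$; note $X\subseteq R$ always. The key observation is: if $R\neq V$, then by connectedness of $G$ there is an edge $\{u,v\}$ with $u\notin R$, $v\in R$. If already $uv$ is an arc of the current $D_{c''}$, then $u$ reaches $X$ through $v$, contradicting $u\notin R$ if we keep $R$ saturated; so after saturation there is no arc from $V\setminus R$ into $R$, i.e. $V\setminus R$ is a terminal section? — here one must be careful about direction, so let me phrase it properly below.

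The cleaner formulation: once $R$ is the set of all vertices reaching $X$, no vertex outside $R$ has an arc into $R$ would be false in general; rather, what is true is that $V\setminus R$ contains a terminal section relative to the arcs that matter. I would instead argue as follows. Let $c''$ be the current coloring and $R$ the set of vertices that reach $X$ in $D_{c''}$. If $R=V$ we are done. Otherwise consider $Y=V\setminus R$. No arc of $D_{c''}$ goes from $Y$ to $R$ — because such an arc would put its tail into $R$. Hence $Y$ is a \emph{terminal section} of $D_{c''}$ (no arc leaving $Y$). Now apply the terminal recoloring of $Y$: by the remark in the preamble, this is still a proper coloring, it destroys all arcs entering $Y$ (there are none relevant from $R$ anyway, and in any case none leave $Y$, so arcs inside $Y$ and arcs inside $R$ and the coloring on $R$ are all unchanged — in particular $c''$ restricted to $R$, hence to $X$, is untouched), and it possibly creates new arcs leaving $Y$ (arcs $uv$ with $u\in Y$, $v\in R$, and old color $c''(u)=c''(v)-2$). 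The main point is that at least one such new arc is created: pick an edge $\{u,v\}$ with $u\in Y$, $v\in R$ (exists by connectedness). In $c''$ we had, since $u\in Y$, no arc $uv$, and since $Y$ is a terminal section also no arc $vu$; properness gives $c''(u)\neq c''(v)$; so $c''(u)\in\{c''(v)-2,\dots\}$ — with $\chi$ colors the only forbidden residues are $c''(u)=c''(v)$ (properness) and $c''(u)=c''(v)-1$ (would be arc $uv$) and $c''(u)=c''(v)+1$ (would be arc $vu$); hence when $\chi=3$ there is no room and we directly get a contradiction unless such a pair with the right offset exists — and for general $\chi$ we must choose the edge more cleverly so that $c''(u)=c''(v)-2$, i.e. after recoloring $c''(u)+1=c''(v)-1$, making $uv$ an arc.

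Here is where I expect the real work, and I would route around it as follows rather than picking a single edge. After setting $Y=V\setminus R$ a terminal section and performing its terminal recoloring, recompute the set $R'$ of vertices reaching $X$. I claim $R'\supsetneq R$: indeed take a shortest path in $G$ from $Y$ to $R$, say $y_0 y_1 \cdots y_k = r$ with $y_0,\dots,y_{k-1}\in Y$ and $r\in R$; if no new arc were created along any such path for any choice, one derives a forbidden color pattern. The honest way is the standard one used for this auxiliary digraph: iterate terminal recolorings of $Y$ itself; each terminal recoloring of $Y$ cyclically shifts colors on $Y$, and after at most $\chi$ such shifts the edge $\{u,v\}$ reaches the configuration $c(u)=c(v)-2$, hence becomes an arc into $R$ on the next shift, so $u$ (and then, propagating, more vertices of $Y$) joins $R$. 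Since $V$ is finite and $R$ strictly grows after finitely many terminal recolorings of the current complement, the process terminates with $R=V$; the coloring produced is the desired $c'$, and it is identical to $c$ on $X$ because every terminal recoloring we performed was on a set disjoint from $X$ (it was a subset of $V\setminus R\subseteq V\setminus X$). The main obstacle is precisely this termination/progress argument — guaranteeing that some terminal recoloring of the current complement of $R$ strictly enlarges $R$ — and it is handled by the cyclic-shift observation together with connectedness of $G$.
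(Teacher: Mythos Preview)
Your argument is correct and follows essentially the same route as the paper: let $R$ be the set of vertices currently reaching $X$, observe that $Y=V\setminus R$ is a terminal section of $D_{c''}$, apply (possibly repeated) terminal recolorings of $Y$ until connectedness forces a new arc from $Y$ into $R$, and conclude that $R$ strictly grows while the coloring on $X\subseteq R$ is untouched. The paper phrases this via an extremal choice of $c'$ maximizing $|Y_{c'}|$ rather than your explicit iteration, but the mechanism---terminal recoloring of the complement of the reachable set, iterated until connectedness produces a crossing arc---is identical.
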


\pro Let $c'$ be a $\chi$-coloring of $G$ identical with $c$ on $X$.
We define $Y_{c'}$ as the set of vertices of $G$ which are the
beginning of an oriented path in $D_{c'}$ ending in $X$. 
The path can have length $0$, \emph{i.e.} $X$ is included in $Y_{c'}$.
Now, we choose $c'$ a $\chi$-coloring of $G$ identical with $c$ on $X$ with
an associated set $Y_{c'}$ of maximal cardinality.  Let us prove that
$Y_{c'}=V$. Otherwise, notice that, by definition, $Y_{c'}$ is an
initial section of $D_c$, and so that $V\setminus Y_{c'}$ is a
terminal section of $D_c$. Denote by $c_t$ the terminal recoloring of
$V\setminus Y_{c'}$. As $X\subset Y_{c'}$, $c_t$ is also identical to
$c$ on $X$. Moreover, the arcs from $Y_{c'}$ to $V(G)\setminus Y_{c'}$
of $D_{c'}$ are not anymore in $D_{c_t}$ and the only arcs which can
be created are arcs from $V(G)\setminus Y_{c'}$ to $Y_{c'}$ in
$D_{c_t}$.  If no arc from $V(G)\setminus Y_{c'}$ to $Y_{c'}$ is
created, we can repeat the terminal recoloring of $V\setminus Y_{c'}$
until such an arc appears.  As $G$ is connected, the process must stop
at some step, and at least one arc $zz'$ must appear from
$V(G)\setminus Y_{c'}$ to $Y_{c'}$. So, $c_t$ is identical to $c$ on
$X$, and we have $Y_{c'} \cup \{z \} \subseteq Y_{c_t}$ which
contradicts the maximality of $Y_{c'}$. \fpro

In particular, Lemma~\ref{lem:reachable-set} implies that
Conjecture~\ref{conj:colorful} holds if $D_c$ contains an oriented
cycle. Indeed, if $C$ is such a cycle, then $C$ has length a multiple of
$\chi$, and so $C$ has length greater or equal than $\chi$. If we
apply Lemma~\ref{lem:reachable-set} with $X=V(C)$, then we obtain a
$\chi$-coloring $c'$ such that every vertex of $G$ is the beginning
of an oriented path of $D_{c'}$ ending in $V(C)$. So, extending
possibly these paths with some vertices and arcs of $C$, every vertex
of $G$ is the beginning of an oriented path of $D_{c'}$ of length
$\chi$. Thus, $G$ satisfies Conjecture~\ref{conj:colorful}.\\ As
mentioned in~\cite{ALN11}, note that if $G$ contains a clique of
size $\chi (G)$, then for every $\chi$-coloring $c$ of $G$, $D_c$
contains an oriented cycle, and so $G$ verifies
Conjecture~\ref{conj:colorful}.\\

So, we have to focus on the case where $D_c$ is an oriented acyclic
graph.  We introduce some notations for this case.  Let $D$ be an
oriented acyclic graph. The \emph{level partition of $D$} is the
unique partition of $V(D)$ into subsets $(V_1,\cdots, V_k)$ such that
$V_i$ consists of all sinks of the oriented acyclic graph induced by
$D$ on $V \backslash \cup_{j=1}^{i-1} V_j$. As each $V_i$ is the set
of sinks of an acyclic induced oriented subgraph of $D$, it is in
particular an independent set.  The \emph{height of a vertex} $x$,
denoted by $h_D(x)$, is the index of the level $x$ belongs to in the
level partition of $D$. And, the \emph{height of the partition} is the
maximal height of a vertex (i.e. $k$ in our notations, here).  \\ Now,
we introduce notations for an oriented acyclic graph $D_c$ associated
with a $\chi$-coloring $c$ of $G$. Assuming that $D_c$ is acyclic, we
denote by $(V_1^c,\cdots , V_k^c)$ its level partition. Note that by
construction, if $xy$ is an arc of $D_c$ with $x\in V_i$ and $y\in
V_j$ we have $i>j$ and $i-j=1\mod \chi$.  We define the \emph{height
  of $c$} as the height of this level partition. It is also the number
of vertices in a longest oriented path of $D_c$. We denote it by
$h(c)$, and to shorten notations, we write $h_c(x)$ instead of
$h_{D_c}(x)$ to indicate the height of a vertex in the level partition
of $D_c$.  \\ 
Finally, a $\chi$-coloring $c$ is a \emph{nice coloring}
of $G$ if $D_c$ is an oriented acyclic graph with a unique sink. 
Given a nice coloring $c$, every vertex of $D_c$ is the beginning of an oriented path
which ends at this unique sink.  An \emph{in-branching} is an
orientation of a tree in which every vertex has out-degree $1$, except
one vertex, called \emph{the root} of the in-branching. It is 
well-known that, for a fixed vertex $x$ of a digraph $D$, every vertex is
the beginning of an oriented path ending at $x$ if, and only if, $D$
has a spanning in-branching rooted at $x$ (see~\cite{BM08} Chap.~4 for
instance).
Thus, $c$ is nice if, and only if, $D_c$ has a spanning in-branching.
So, if we apply Lemma~\ref{lem:reachable-set} with a set $X$
containing a unique vertex $v$, we obtain a coloring $c$ where every
vertex is the beggining of a path ending in $v$, or equivalently where
$D_c$ has an in-branching rooted at $v$.  If $v$ is not a sink of
$D_c$, then $D_c$ contains an oriented cycle. Otherwise $D_c$ has a
unique sink $v$, and then $c$ is a nice coloring. Thus we have the
following.

\begin{corollary}
\label{col:unique-sink}
Either $G$ admits a $\chi$-coloring $c$ such that $D_c$ contains an
oriented cycle, or for every vertex $v$ of $G$, there is a nice
$\chi$-coloring of $G$ with $v$ as unique sink.
\end{corollary}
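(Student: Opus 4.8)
The plan is to obtain the corollary as an immediate consequence of Lemma~\ref{lem:reachable-set} applied to singletons, together with the facts about in-branchings and sinks recalled just above the statement. Assume the first alternative fails, that is, assume that \emph{no} $\chi$-coloring $c$ of $G$ is such that $D_c$ contains an oriented cycle; I must then produce, for every vertex $v$ of $G$, a nice $\chi$-coloring of $G$ having $v$ as its unique sink.

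Fix a vertex $v$ and apply Lemma~\ref{lem:reachable-set} with $X=\{v\}$. This yields a $\chi$-coloring $c$ of $G$ such that every vertex of $G$ is the beginning of an oriented path of $D_c$ ending at $v$; equivalently, $D_c$ admits a spanning in-branching rooted at $v$. By the standing assumption, $D_c$ is acyclic.

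Next I would verify that $v$ is the \emph{unique} sink of $D_c$. First, $v$ is a sink: if $vw$ were an arc of $D_c$, then concatenating it with an oriented path from $w$ to $v$ (which exists by the previous paragraph) would produce an oriented closed walk, hence an oriented cycle in $D_c$, contradicting acyclicity. Second, $v$ is the only sink: any sink $u$ is the beginning of an oriented path of $D_c$ ending at $v$, but a sink has out-degree $0$, so this path has length $0$ and $u=v$. Thus $D_c$ is acyclic with the single sink $v$, which is precisely the definition of a nice coloring with $v$ as unique sink. Since $v$ was arbitrary, the second alternative holds, completing the dichotomy.

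There is essentially no genuine obstacle here: the content is entirely carried by Lemma~\ref{lem:reachable-set} and the definitions. The one point deserving attention is the logical structure of the dichotomy — one must negate the first alternative \emph{globally} (over all $\chi$-colorings) \emph{before} fixing $v$, so that the coloring delivered by Lemma~\ref{lem:reachable-set} for that $v$ is certified to be acyclic; the rest then follows by the elementary sink argument above.
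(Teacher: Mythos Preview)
Your proof is correct and follows exactly the approach of the paper: apply Lemma~\ref{lem:reachable-set} with $X=\{v\}$ to obtain a spanning in-branching of $D_c$ rooted at $v$, then observe that if $v$ were not a sink one would get an oriented cycle, and otherwise $v$ is necessarily the unique sink, making $c$ nice. The only difference is that you spell out explicitly the two small verifications (that $v$ is a sink and that it is the only one) which the paper leaves implicit in the in-branching discussion preceding the corollary.
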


Note that, given a nice coloring $c$ of $G$, the vertices belonging
to a same level of the level partition of $D_c$ receive the same
color by $c$. Indeed, $V_1^c$ only contains the unique sink $r$ of
$D_c$, and, as every vertex in $V_i^c$ has an out-neighbour in $V_{i-1}^c$,
an easy induction shows that $c(x)=c(r)-i+1 \mod \chi$ for every $x\in
V_i$.\\
Now, we can establish the following lower bound on the height of $D_c$,
for a nice coloring $c$ of $G$.

\begin{lemma}
\label{lem:height-min}
Let $c$ be a nice $\chi$-coloring of $G$.
We have $h(c)\ge 2\chi-1$.
\end{lemma}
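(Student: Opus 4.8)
The plan is to produce, from the given nice coloring $c$, a long oriented path in $D_c$ by a two-phase walk: first walk "up" along in-neighbours through all $\chi$ color classes, then reuse the spanning in-branching to come back "down". Concretely, let $r$ be the unique sink of $D_c$, so $r \in V_1^c$. Since $c$ is nice, every vertex is the start of an oriented path ending at $r$; in particular the levels $V_i^c$ cycle through the colors, with $c(x) = c(r) - i + 1 \bmod \chi$ for $x \in V_i^c$, and the top level $V_k^c$ consists of sources of $D_c$. The first observation I would record is that $h(c) \ge \chi$: otherwise fewer than $\chi$ colors appear among the levels, but every color must appear since $c$ is a proper $\chi$-coloring of a connected (hence nonempty, and using more than one color) graph — more carefully, the level structure forces consecutive colors $c(r), c(r)-1, \dots$, and if only $j < \chi$ levels existed we could perform an initial recoloring on $V_k^c$ (which is an initial section) to strictly decrease something, or simply note a colorful path has $\chi$ vertices and one exists by Gallai–Roy, forcing $k \ge \chi$.

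The heart of the argument is to push past $\chi$ up to $2\chi - 1$. Here is the mechanism I would use. Take a source $s$ of $D_c$, i.e. $s \in V_k^c$ with $k = h(c)$; since $s$ is the start of an oriented path to $r$ of length $h_c(s) - 1 = k - 1$, we get $k \ge \chi$ already, but I want to find a source at height $\ge 2\chi - 1$. Suppose for contradiction $h(c) \le 2\chi - 2$. Consider the set $X = V_1^c \cup \dots \cup V_{\chi-1}^c$ of the bottom $\chi - 1$ levels. By the level-partition property no arc enters $X$ from above except between consecutive levels, and in fact $X$ is a terminal section: every arc out of a vertex of height $\le \chi-1$ goes to a strictly smaller height, hence stays in $X$. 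Wait — I must be careful about which of "initial/terminal section" applies; the vertices of height $1$ are the sinks, so the bottom levels form a terminal section. Apply the terminal recoloring to $X$: by Lemma~\ref{lem:initialrecoloring}'s analogue this stays proper, it adds one to each color in $X$, and it destroys all arcs entering $X$ while possibly creating arcs leaving $X$ (arcs $xy$, $x \in X$, $y \notin X$, $c(x) = c(y) - 2$). The point is that after this operation the new digraph still has a spanning in-branching (one checks the root can be taken in the old level $V_{\chi}^c$, since every old path into $X$ of length $\ge \chi - 1$ now re-enters $X$ from outside via a freshly created arc), so the coloring is still nice, but its height has strictly increased — contradicting the choice, if we had instead chosen $c$ of \emph{maximum} height among nice colorings with the given sink.

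So the clean route is: among all nice $\chi$-colorings of $G$ (these exist by Corollary~\ref{col:unique-sink}, in the relevant case where $D_c$ is acyclic — and if some $D_c$ has a cycle the conjecture already holds, but here we are handed a nice $c$, so we argue by a local modification), replace $c$ by one maximizing $h(c)$; then show $h(c) \le 2\chi - 2$ leads to a nice coloring of larger height via a terminal (or initial) recoloring of the bottom $\chi - 1$ levels, a contradiction. The main obstacle — and the step I would spend the most care on — is verifying that the recolored digraph is still \emph{nice}, i.e. still acyclic with a unique sink: one must check that no oriented cycle is created (if one were, we would actually be happy, as noted after Lemma~\ref{lem:reachable-set}, so this horn is fine too) and that a new global sink appears at the bottom. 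I expect the cleanest formulation is: if $h(c) < 2\chi - 1$, look at a vertex $v$ of maximum height; its color equals $c(r) - h(c) + 1$, which is \emph{not} equal to $c(r) - 1 = c(r) + \chi - 1$ unless $h(c) \ge \chi$, and not forced to "close the cycle of colors" unless $h(c) \ge 2\chi - 1$; performing the bottom-levels recoloring precisely promotes the old sink's color and creates a new arc from the old level $\chi-1$ down into the shifted block, lengthening the longest path. I would present this as the contradiction completing the proof that $h(c) \ge 2\chi - 1$.
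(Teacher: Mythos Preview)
Your approach has a genuine gap, and it also misses a much simpler argument.

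The lemma is stated for an \emph{arbitrary} nice coloring $c$: given any such $c$, one must show $h(c)\ge 2\chi-1$. Your mechanism, however, is to produce from $c$ another nice coloring of strictly larger height, and then to derive a contradiction by having ``instead chosen $c$ of maximum height''. Even if every step you sketch were correct, this only shows that a height-maximal nice coloring satisfies the bound; it says nothing about the particular $c$ you were handed. The lemma is later applied to nice colorings that are \emph{not} assumed height-maximal (for instance, to the coloring $c'$ produced by a switch recoloring, in order to guarantee an oriented path of length $3$ ending at the old sink), so weakening the statement to ``some nice coloring has height $\ge 2\chi-1$'' would not suffice. On top of this, the step you yourself flag as the main obstacle --- that the terminal recoloring of the bottom $\chi-1$ levels yields again a nice coloring, and of strictly larger height --- is not actually established in your write-up; the description of where the new sink sits and why the longest path grows is left at the level of expectation.

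The paper's proof avoids all of this with a one-line color-counting trick that works for any nice $c$. Assume $h(c)\le 2\chi-2$ and take $X=V^c_{\chi}\cup V^c_{\chi+1}\cup\dots\cup V^c_{h(c)}$, the \emph{top} levels. This is nonempty (else $G$ would be $(\chi-1)$-colorable) and is an initial section of $D_c$. In a nice coloring the level $V^c_i$ carries color $c(r)-i+1\bmod\chi$, so the color $c(r)+1$ occurs only at levels $\chi,2\chi,\dots$; under the hypothesis $h(c)\le 2\chi-2$ it occurs only at $V^c_\chi$. After the initial recoloring of $X$, the vertices of $V^c_\chi$ move to color $c(r)$, and no level inside $X$ (which now carry colors $c(r),c(r)-1,\dots$) picks up color $c(r)+1$; outside $X$ the colors are unchanged and range over $c(r),c(r)-1,\dots,c(r)+2$. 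Hence the color $c(r)+1$ has vanished, giving a proper $(\chi-1)$-coloring of $G$ --- a contradiction. No maximality assumption on $c$ is needed, and no delicate analysis of niceness of a new coloring is required.
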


\pro Assume by contradiction that $h(c)\le 2\chi -2$ for a nice
coloring $c$ of $G$. Denote by $r$ the unique sink of $D_c$ (which
forms the level $V_1^c$), and consider the set $X= V^c_{\chi} \cup
V^c_{\chi +1} \cup \dots \cup V^c_{h(c)}$. This set $X$ is not empty
(otherwise $G$ would have a partition in less then $\chi$ independent
sets) and is an initial section of $D_c$. When performing the initial
recoloring of $X$, the color $c(r)+1$ disappears. Indeed, only the
vertices of $V^c_{\chi}$ used this color before the recoloring, and
no vertex of $X$ uses it after the recoloring.  So, we obtain a
$(\chi-1)$-proper coloring of $G$, a contradiction.  \fpro

As a consequence, in a nice coloring of $G$, there exists a backward
certifying path for the sink of $D_c$. As, by
Lemma~\ref{col:unique-sink}, every vertex of $G$ can be the sink of
$D_c$ for a nice coloring $c$, or we find an oriented cycle in $D_c$,
it means that for every vertex $x$, there exists a coloring of $G$
containing a colorful path with end $x$, what was already proved
in~\cite{Li01}.\\ But, we can be more precise. Let $c$ be a nice
coloring of $G$, we denote by $B_c$ the set of vertices of $G$ which
have no certifying path. We have seen that the sink of $D_c$ is not in
$B_c$. Moreover, in the level partition $(V^c_1,\cdots, V^c_k)$ of $D_c$,
every vertex in $V^c_i$ has an out-neighbour in $V^c_{i-1}$, and then,
every vertex in $V^c_i$ with $i\ge \chi$ has a forward certifying
path. Then, we obtain the following.

\begin{lemma}
\label{lem:Bc}
Let $c$ be a nice coloring of $G$.  We have $B_c\subseteq V^c_2\cup
V^c_3\cup \dots \cup V^c_{\chi -1}$.
\end{lemma}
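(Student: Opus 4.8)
The plan is to argue that every vertex which lies neither in $V^c_1$ nor in any level of index at least $\chi$ must already possess a certifying path, so that $B_c$ is confined to the intermediate levels $V^c_2,\dots,V^c_{\chi-1}$. Since $c$ is nice, $D_c$ is acyclic with a single sink $r$, and $r$ is exactly the level $V^c_1$ (as noted in the excerpt, each level is monochromatic and $V^c_1=\{r\}$). So there are only two families of vertices to handle: the sink $r$ itself, and the vertices of height at least $\chi$.

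For the sink, I would invoke Lemma~\ref{lem:height-min}: the level partition of $D_c$ has height $h(c)\ge 2\chi-1\ge \chi$. Starting from any vertex of height $h(c)$ and repeatedly passing to an out-neighbour of strictly smaller height, one obtains an oriented path of $D_c$ on $h(c)$ vertices ending at the unique sink $r$; its last $\chi$ vertices form an oriented sub-path $x_\chi\to x_{\chi-1}\to\cdots\to x_1=r$ in $D_c$. Because each arc of $D_c$ raises the colour by one modulo $\chi$, these $\chi$ vertices carry the $\chi$ distinct colours, so read from $r$ this is a (backward) certifying path for $r$; hence $r\notin B_c$.

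For a vertex $x\in V^c_i$ with $i\ge\chi$, the key structural fact is that every vertex of a level $V^c_j$ with $j\ge 2$ has an out-neighbour in the previous level $V^c_{j-1}$: otherwise all its out-neighbours would lie in $V^c_1\cup\cdots\cup V^c_{j-2}$ and it would be a sink of $D_c$ restricted to $V\setminus(V^c_1\cup\cdots\cup V^c_{j-2})$, contradicting membership in $V^c_j$. Iterating this from $x$ yields an oriented path $x=y_i\to y_{i-1}\to\cdots\to y_{i-\chi+1}$ of $D_c$ on $\chi$ vertices, which is legitimate precisely because $i-\chi+1\ge 1$. As before its $\chi$ vertices receive all $\chi$ colours, so it is a (forward) certifying path for $x$, and $x\notin B_c$.

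Putting the two cases together, $B_c$ meets neither $V^c_1$ nor any $V^c_i$ with $i\ge\chi$, i.e.\ $B_c\subseteq V^c_2\cup V^c_3\cup\cdots\cup V^c_{\chi-1}$ (an empty statement when $\chi\le 2$, consistent with the separate treatment of $1$- and $2$-chromatic graphs). I do not expect a serious obstacle here: the one point deserving a careful sentence is the "out-neighbour in the immediately preceding level" property of the level partition, and the rest is routine bookkeeping with the fact that arcs of $D_c$ increment the colour by one modulo $\chi$.
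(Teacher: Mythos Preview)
Your proposal is correct and follows essentially the same approach as the paper: the sink $r$ is handled via Lemma~\ref{lem:height-min} (a backward certifying path ending at $r$), and vertices of height at least $\chi$ are handled via the ``out-neighbour in the immediately preceding level'' property of the level partition (a forward certifying path). One minor imprecision: in your treatment of the sink you write ``repeatedly passing to an out-neighbour of strictly smaller height'' to obtain a path on $h(c)$ vertices, but this only yields $h(c)$ vertices if each step drops exactly one level; you should invoke the same out-neighbour-in-$V^c_{j-1}$ fact there as well (which you prove carefully in the second case).
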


Now, we pay attention to 3-chromatic graphs.

\section{Colorful paths for 3-chromatic graphs}
\label{sec:proof}

In this section, we focus on the special case $\chi =3$ and prove
Theorem~\ref{theo:3colorful}.  Note that, when
considering a 3-coloring $c$ of a 3-chromatic graph $G$, every edge
of $G$ appears as an arc of the oriented graph $D_c$ (indeed, for any
edge ${x,y}$ of $G$, we have $c(x)-c(y)\in \{-1,1\}$). Furthermore,
when we perform an initial recoloring on an initial section $X$ of
$D_c$, all the arcs leaving $X$ become arcs entering into $X$.

\begin{lemma}[S.~Akbari et al.~\cite{ALN11}]
\label{lem:cycleok}
Conjecture~\ref{conj:colorful} is true for every odd cycle except
$C_7$.
\end{lemma}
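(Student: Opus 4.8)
The plan is to give, for every odd $n\neq 7$, an explicit proper $3$-coloring of the cycle $C_n=v_1v_2\cdots v_nv_1$ under which every vertex begins a colorful path, organised according to the residue of $n$ modulo $3$. (Recall $\chi(C_n)=3$ for $n$ odd.)

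The starting point is a remark specific to cycles. When $n\ge 5$, a vertex $v$ of $C_n$ is the beginning of exactly two paths on three vertices, obtained by following the cycle in each of the two directions, and their third vertices are precisely the two vertices at distance $2$ from $v$. Since the coloring is proper, such a path on vertices $v,u,w$ is colorful exactly when $c(w)\neq c(v)$. Hence it suffices to produce a proper $3$-coloring of $C_n$ in which no vertex has \emph{both} of its distance-$2$ neighbours of its own color; call a vertex violating this \emph{bad}. So the goal is a proper $3$-coloring with no bad vertex ($C_3$ being trivial, handled below anyway).

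Now the case analysis. If $3\mid n$, the periodic coloring $1,2,3,1,2,3,\dots$ gives every vertex a color differing from those of its two distance-$2$ neighbours, so there is no bad vertex; equivalently $D_c$ is a directed $n$-cycle and Conjecture~\ref{conj:colorful} already follows from the remark after Lemma~\ref{lem:reachable-set}. If $n\equiv 2\pmod 3$, so $n=3k+2\geq 5$ (this includes $C_5$), I use $k$ copies of the block $1,2,3$ followed by the block $1,2$. If $n\equiv 1\pmod 3$ with $n\geq 13$, so $n=3k+1$ with $k\geq 4$, I use $k-1$ copies of $1,2,3$ followed by the block $2,1,3,2$. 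In either case properness is immediate; outside a bounded zone around the appended block every vertex is good because in the period-$3$ part the colors at distance $2$ differ; and the remaining, boundedly many, vertices near and inside the appended block are checked directly. Finally $C_7$ is a genuine exception: in any proper $3$-coloring of $C_7$ some color class is an independent set of exactly three vertices, necessarily of the form $\{v_i,v_{i+2},v_{i+4}\}$ up to rotation, and then $v_{i+2}$ is bad.

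The seam verifications are routine; the real point is the case $n\equiv 1\pmod 3$. Appending only one or two "defect" vertices to a period-$3$ pattern does not work — it always creates a vertex wedged between two distance-$2$ neighbours of its own color, which is exactly the phenomenon making $C_7$ fail — so the $4$-vertex correction block has to be chosen with care (for instance $1,2,1,2$ or $2,3,1,3$ produce a bad vertex, whereas $2,1,3,2$ does not), and checking that this block never produces a bad vertex for any $k\ge 4$ is the one place where a small, explicit case analysis is unavoidable.
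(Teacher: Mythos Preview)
Your proof is correct and follows essentially the same approach as the paper: exhibit an explicit proper $3$-coloring of $C_n$ and verify directly that every vertex begins a colorful path. The paper uses a single symmetric formula (writing the odd cycle as $v_0v_1\cdots v_kv_k'v_{k-1}'\cdots v_1'v_0$ and setting $c(v_0)=3$, $c(v_i)=c(v_i')=i\bmod 3$ for $i<k$, $c(v_k)=k\bmod 3$, $c(v_k')=k{+}1\bmod 3$) and leaves the check to the reader, whereas you split by $n\bmod 3$, introduce the clean ``bad vertex'' reformulation, and additionally supply the argument that $C_7$ genuinely fails --- all minor presentational differences rather than a different method.
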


\pro For the sake of completeness, we just give the coloring yielding
the result. Let $C=v_0v_1\dots v_kv_k'v_{k-1}'\dots v_2'v_1'v_0$ be an
odd cycle different from $C_7$. We define the 3-coloring $c$ of $C$ by
$c(v_0)=3$, $c(v_i)= c(v'_i)=i \mod 3$ for $1\le i\le k-1$, $c(v_k)=k
\mod 3$ and $c(v'_k)=k+1\mod 3$. Now, it is easy to check that if
$k\neq 3$ (i.e. $C\neq C_7$), then every vertex is the beginning of a
colorful path.\fpro

In the following we assume by contradiction that
Theorem~\ref{theo:3colorful} is not true and consider a minimal
counter-example $G$ (subject to its number of vertices) distinct from
$C_7$.  
The only consequence of the minimal cardinality of $G$ we
use is given by the following claim.

\begin{claim}
\label{claim:no-twin}
The graph $G$ does not contain any twins, that is, there is no two vertices
$x$ and $y$ in $G$ with $N_G(x)=N_G(y)$.
\end{claim}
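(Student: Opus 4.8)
The plan is a standard twin-deletion argument, with one exceptional small case to dispose of by hand. Suppose for contradiction that $G$ contains distinct vertices $x,y$ with $N_G(x)=N_G(y)$; note that then $x\notin N_G(x)=N_G(y)$, so $x$ and $y$ are non-adjacent. I would set $G':=G-y$ (delete $y$ and its incident edges) and first check that $G'$ is again a connected $3$-chromatic graph. Connectivity follows because $x$ ``covers'' for $y$: in any path of $G$ through $y$, an internal piece $a\,y\,b$ has $a,b\in N_G(y)=N_G(x)$ and can be rerouted as $a\,x\,b$, giving a walk in $G'$ between any two prescribed vertices of $G'$. For the chromatic number, $\chi(G')\le\chi(G)=3$ since $G'$ is a subgraph of $G$, and $\chi(G')\ge 3$ because a proper $2$-coloring of $G'$ would extend to one of $G$ by setting $c(y):=c(x)$ (all neighbours of $x$, hence of $y$, avoid the colour of $x$), contradicting $\chi(G)=3$.

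Next, assuming $G'\ne C_7$, the minimality of $G$ applies: $G'$ admits a $3$-coloring $c'$ in which every vertex begins a colorful path. I would then extend $c'$ to a coloring $c$ of $G$ by putting $c(y):=c(x)$. This stays proper since the only new edges join $y$ to $N_G(x)$. Every vertex of $V(G)\setminus\{y\}=V(G')$ still begins a colorful path, because such a path lives inside $G$ and the colours on $V(G')$ are unchanged; and $y$ begins a colorful path obtained from a colorful path $x\,a\,b$ of $x$ by swapping $x$ for $y$ (legitimate because $a\in N_G(x)=N_G(y)$ and $c(y)=c(x)$). Hence $G$ satisfies the conclusion of Theorem~\ref{theo:3colorful}, contradicting that it is a counter-example.

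The one place that genuinely needs care — and the main obstacle — is the excluded case $G'=C_7$: here minimality gives nothing, so $G$ must be handled directly. But if $G-y=C_7$ then $x$ has degree $2$ in that cycle, so $G$ is exactly the fixed $8$-vertex graph obtained from a $7$-cycle $v_1v_2\cdots v_7$ by adding a vertex $y$ adjacent to both neighbours of one vertex; by vertex-transitivity of $C_7$ we may assume $y$ is adjacent to $v_2$ and $v_7$. For this specific graph one verifies by inspection that the $3$-coloring $c(v_1)=1$, $c(v_2)=3$, $c(v_3)=2$, $c(v_4)=1$, $c(v_5)=2$, $c(v_6)=1$, $c(v_7)=3$, $c(y)=2$ is proper and makes every one of the eight vertices the beginning of a colorful path, again contradicting that $G$ is a counter-example. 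Combining the two cases proves the claim.
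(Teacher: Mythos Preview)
Your proof is correct and follows essentially the same approach as the paper: delete one twin, invoke minimality on $G'=G-y$, extend the coloring back by $c(y):=c(x)$, and handle the exceptional case $G'=C_7$ by an explicit coloring. You supply more detail than the paper (the connectivity check and an explicit coloring where the paper defers to a figure), and your coloring of the twinned $C_7$ does indeed work.
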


\pro Assume that $G$ has two vertices $x$ and $y$ with
$N_G(x)=N_G(y)$. First, notice that $G\setminus \{ y \}$ is
3-chromatic, as we can extend every coloring of $G\setminus \{ y \}$
to $G$ by coloring $y$ with the color of $x$.  Now, if $G\setminus \{ y
\}=C_7$, then in the coloring of $G$ given Figure~\ref{fig:c7twin}
every vertex is the beginning of a colorful path.
\begin{figure}[!ht]
\centering
\includegraphics[scale=0.5]{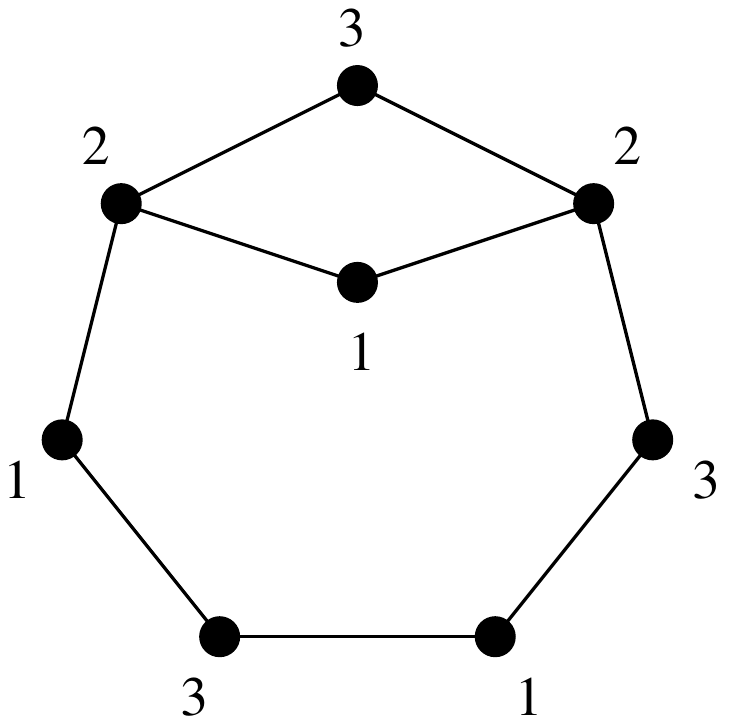}
\caption{A coloring of the 'twinned $C_7$' in which every vertex is the 
beginning of a colorful path.}
\label{fig:c7twin}
\end{figure}
So, $G \setminus \{ y \}$ is different from $C_7$. Since $G$ is a
minimum counterexample, there is a proper coloring $c$ of $G\setminus
\{ y \}$ such that every vertex is the beginning of a colorful
path. Extending the coloring $c$ to $y$ with $c(y)=c(x)$ provides a
certifying path for $y$ since $x$ has one. So, $G$ would not be a
counter-example to Theorem~\ref{theo:3colorful}, a contradiction.
\fpro

Now, let $c$ be a nice coloring of $G$, which exists by
Corollary~\ref{col:unique-sink}. As previously noticed, the associated
oriented graph $D_c$ is acyclic since $G$ is a counter-example to
Theorem~\ref{theo:3colorful}. We can describe precisely the structure of $D_c$
as follows. By Lemma~\ref{lem:Bc}, the set $B_c$ of vertices which do not
have a certifying path is a subset of $V_2$, the second level in the
level partition of $D_c$. So, if we denote by $r_c$ the unique sink of $D_c$,
every vertex of $B_c$ has a unique out-neighbour which is $r_c$.
Moreover, every vertex $b$ of $B_c$ is not the end of an oriented path
of length two in $D_c$. Thus, either $b$ is a source of $D_c$ or all
its in-neighbours are sources of $D_c$, and by construction of $D_c$ these 
in-neighbours belong to levels $V^c_i$ with $i=0 \mod 3$.
Finally, $D_c$ has height at
least five by Lemma~\ref{lem:height-min} and so, at least one vertex
of $V_2$ is the beginning of a backward certifying path. In
particular, we know that $B_c$ is a proper subset of
$V_2$. Figure~\ref{fig:Dc} depicts the situation.\\

\begin{figure}[!ht]
\centering
\input{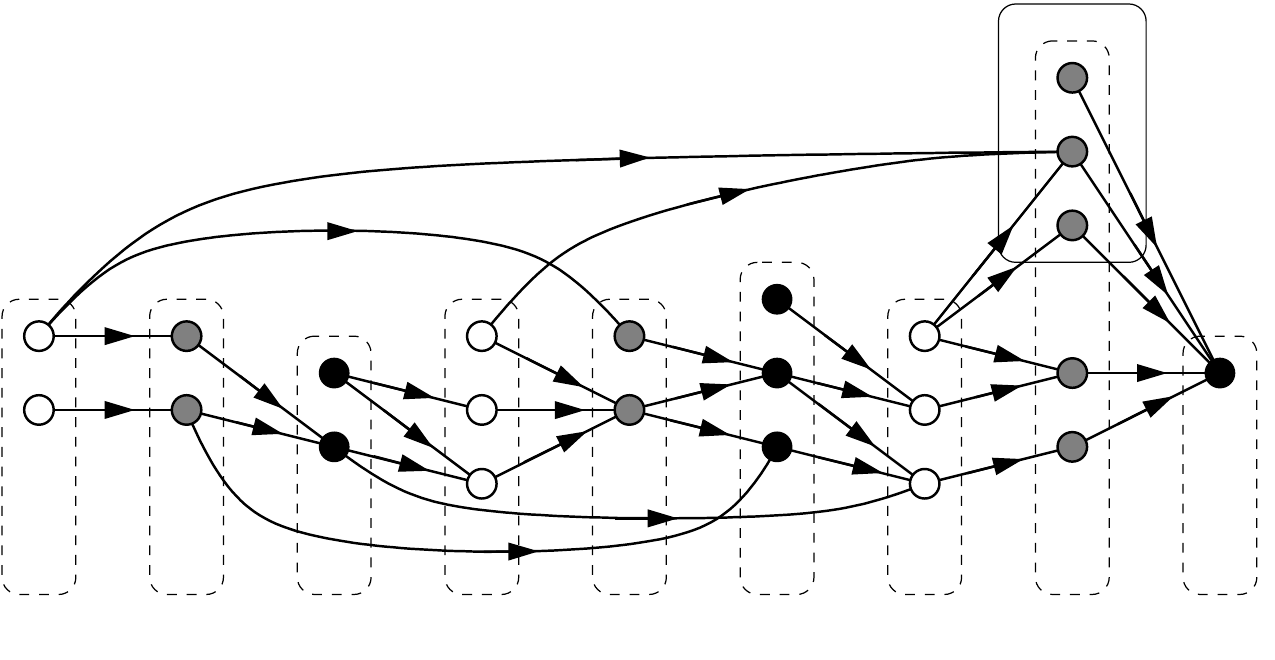_t}
\caption{An illustrative example of oriented graph $D_c$. The vertices
  colored black, gray and white respectively receive value 3, 2 and 1
  by $c$. }
\label{fig:Dc}
\end{figure}

Consider the following special initial recoloring of $D_c$. For a
vertex $b$ of $B_c$, the previous argument ensures that $\{b\}\cup
N_{D_c}^-(b)$ is an initial section of $D_c$. The \emph{switch
  recoloring on $b$} is the initial recoloring on $\{b\} \cup
N_{D_c}^-(b)$. By Lemma~\ref{lem:initialrecoloring}, this coloring is
proper, and moreover, it satisfies the following properties.

\begin{claim}
\label{claim:switch}
Let $b$ be a vertex of $B_c$ and denote by $c'$ the switch recoloring
on $b$. The oriented graph $D_{c'}$ has the following properties:

{\bf (a)} $c'$ is a nice coloring of $G$, the unique sink of
$D_{c'}$ is $b$ and so $V_1^{c'}=\{b\}$.

{\bf (b)} $V^{c'}_2 = \{r_c\}\cup N^-_{D_c}(b)$, $V^{c'}_3= V^c_2
\setminus \{b\}$ and for every $i=4,\dots ,h(c)+1$, if $i=1\mod 3$

\hspace*{0,5cm} we have $V_{i}^{c'} = V_{i-1}^c \setminus N_{D_c}^-(b)$ and if
$i\neq 1\mod 3$ we have $V_{i}^{c'} = V_{i-1}^c$.

{\bf (c)} If $V^c_{h(c)}\setminus N^-_{D_c}(b)\neq \emptyset$ then
$h(c')=h(c)+1$.

{\bf (d)} If $V^c_{h(c)}\subseteq N^-_{D_c}(b)$ then $h(c')=h(c)$.


{\bf (e)} $B_{c'}$ is a subset of $N^-_{D_c}(b)$, i.e. $r_c$ has a
certifying path in $D_{c'}$.
\end{claim}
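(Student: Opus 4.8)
The plan is to describe precisely how $D_{c'}$ differs from $D_c$, using heavily that $\chi=3$. Put $N:=N^-_{D_c}(b)$ and $X:=\{b\}\cup N$, so that $c'$ is the initial recoloring of the initial section $X$ (shown to be initial just above the claim), hence proper by Lemma~\ref{lem:initialrecoloring}. Because $\chi=3$, every edge of $G$ is an arc of both $D_c$ and $D_{c'}$, and passing from $D_c$ to $D_{c'}$ merely reverses every arc joining $X$ to $V\setminus X$ --- all of which leave $X$, since $X$ is initial --- and leaves every other arc unchanged. The only arcs inside $X$ are the arcs from $w$ to $b$ with $w\in N$ (the vertices of $N$ are sources of $D_c$, hence pairwise non-adjacent, and the unique out-neighbour of $b$ is $r_c\notin X$), and they survive in $D_{c'}$, whereas the arc from $b$ to $r_c$ is reversed; so $b$ has out-degree $0$ in $D_{c'}$. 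Every other vertex keeps an out-arc in $D_{c'}$: $r_c$ and each $w\in N$ now point to $b$, while a vertex $v\notin X\cup\{r_c\}$ had in $D_c$ some out-arc, which cannot have entered the initial section $X$ and thus persists. Hence $b$ is the unique sink of $D_{c'}$. Finally $D_{c'}$ is acyclic, for otherwise, as observed just after Lemma~\ref{lem:reachable-set}, the connected graph $G$ with the proper $3$-coloring $c'$ would satisfy Conjecture~\ref{conj:colorful}, contradicting that $G$ is a counter-example to Theorem~\ref{theo:3colorful}. So $c'$ is a nice coloring with $V^{c'}_1=\{b\}$, which is {\bf (a)}.

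For {\bf (b)} I would prove by induction on the level that $h_{c'}(b)=1$, that $h_{c'}(w)=2$ for every $w\in N$, and that $h_{c'}(v)=h_c(v)+1$ for every $v\notin X$; in particular $h_{c'}(r_c)=2$. The base step inspects out-neighbourhoods at the bottom of $D_{c'}$: $b$ is a sink; $r_c$ has $b$ as sole out-neighbour; each $w\in N$ has $b$ as sole out-neighbour (all its $D_c$-out-arcs were reversed); and a vertex $v\notin X$ with $h_c(v)=2$ still points only into $D_{c'}$-level $2$ (it points to $r_c$, and any out-arc created by reversal goes to a vertex of $N$, since the only out-neighbour of $b$ is $r_c\ne v$). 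For the induction step, a vertex $v\notin X$ with $h_c(v)=i\ge 3$ keeps all of its $D_c$-out-neighbours (none lies in $N$, a set of sources, and none equals $b$, as $X$ is initial), its out-neighbour of highest $D_c$-level sits at level $i-1$, hence at $D_{c'}$-level $i$ by induction, and any extra out-arc created by reversal again reaches only $D_{c'}$-level $2$; so $h_{c'}(v)=i+1$. Reading off the fibres of $h_{c'}$, and using that $N$ is contained in the $D_c$-levels of index $\equiv 0\pmod 3$, gives exactly the description in {\bf (b)} --- in particular $V^{c'}_i=V^c_{i-1}\setminus N$ for all $i\ge 4$, which reduces to $V^c_{i-1}$ precisely when $i\not\equiv 1\pmod 3$. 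Then {\bf (c)} and {\bf (d)} are immediate: if $V^c_{h(c)}\setminus N\neq\emptyset$ then $V^{c'}_{h(c)+1}=V^c_{h(c)}\setminus N\neq\emptyset$ while $V^{c'}_{h(c)+2}=V^c_{h(c)+1}\setminus N=\emptyset$, so $h(c')=h(c)+1$; and if $V^c_{h(c)}\subseteq N$ then $V^{c'}_{h(c)+1}=V^c_{h(c)}\setminus N=\emptyset$ while $V^{c'}_{h(c)}=V^c_{h(c)-1}\setminus N$ is non-empty, because $V^c_{h(c)-1}$ and $V^c_{h(c)}$ cannot both be contained in $N$ (two consecutive level indices cannot both be $\equiv 0\pmod 3$), so $h(c')=h(c)$.

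For {\bf (e)}, Lemma~\ref{lem:Bc} applied to the nice coloring $c'$ gives $B_{c'}\subseteq V^{c'}_2=\{r_c\}\cup N$, so it suffices to show $r_c\notin B_{c'}$. By Lemma~\ref{lem:height-min} we have $h(c)\ge 5$, so a longest directed path of $D_c$ has at least four vertices and ends at the unique sink $r_c$; denote its last four vertices, read from the end, by $r_c,p_2,p_3,p_4$, so that the arcs $p_4\to p_3$, $p_3\to p_2$, $p_2\to r_c$ are in $D_c$. Then $p_2\neq b$, since $p_2$ is the terminus of the directed path of length two $p_4p_3p_2$ whereas no vertex of $B_c$ is such a terminus, and $p_3\notin N$, since $p_3$ has the in-neighbour $p_4$ whereas every vertex of $N$ is a source; hence $p_2,p_3\notin X$, and $c'$ agrees with $c$ on $\{r_c,p_2,p_3\}$. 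As the arcs $p_3\to p_2$ and $p_2\to r_c$ lie in $D_c$, we have $c(p_2)=c(r_c)-1$ and $c(p_3)=c(r_c)-2$, so $c(p_3),c(p_2),c(r_c)$ are the three colors; therefore the path $r_cp_2p_3$ of $G$ --- whose edges underlie the arcs $p_2\to r_c$ and $p_3\to p_2$ --- is a colorful path for $c'$ starting at $r_c$. Thus $r_c\notin B_{c'}$, and $B_{c'}\subseteq N=N^-_{D_c}(b)$.

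The step I expect to cost the most effort is {\bf (b)}: one must be careful about which arcs reverse, keep track of the stray reversed arcs (which always land at $D_{c'}$-level $2$ and so do not perturb the heights), and then run the height induction cleanly. Once the level partition of $D_{c'}$ is pinned down, parts {\bf (a)}, {\bf (c)}, {\bf (d)} and {\bf (e)} follow with essentially no extra work.
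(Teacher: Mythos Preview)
Your proof is correct and follows essentially the same route as the paper's: describe $D_{c'}$ as $D_c$ with the $X$-leaving arcs reversed, identify $b$ as the unique sink, establish the new level partition by induction (you phrase it via the height function, the paper via the level sets, but the induction is the same), read off {\bf (c)} and {\bf (d)}, and for {\bf (e)} exhibit a length-two directed path into $r_c$ that avoids $X$. Two small remarks: you make the acyclicity of $D_{c'}$ explicit (the paper leaves it implicit from the standing counter-example hypothesis), which is a nice touch; and in {\bf (e)} your deduction ``$p_2\neq b$ and $p_3\notin N$, hence $p_2,p_3\notin X$'' tacitly also uses $p_2\notin N$ (it has the in-neighbour $p_3$) and $p_3\neq b$ (its out-neighbour $p_2$ is not $r_c$)---the paper is equally terse here, but you may want to spell these out.
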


\pro Denote by $N$ the set $N^-_{D_c}(b)$. The oriented graph $D_{c'}$
is obtained from $D_{c}$ by reversing all the arcs from $\{b\}\cup
N$ to $V\setminus (\{b\}\cup N)$. The vertex $r_c$ is the unique sink of
$D_c[V \setminus (\{b\} \cup N)]$ and $b$ is the unique sink of $D_c[\{b\}
  \cup N]$. Since $r_cb$ is an arc of $D_{c'}$, $b$ is the unique sink
of $D_{c'}$ which proves {\bf (a)}.

Now, we will prove that $V^{c'}_2 = \{r_c\}\cup N$ and $V_{i-1}^c
\setminus (\{b\}\cup N)\subseteq V_{i}^{c'}$ for $i=3,\dots
,h(c)+1$. We call $(\star)$ this property. Assuming that $(\star)$ is
true, each part of the partition $(\{b\}, \{r_c\}\cup N, V_{2}^c
\setminus (\{b\}\cup N), V_{3}^c \setminus (\{b\}\cup N),\dots ,
V_{h(c)}^c\setminus (\{b\}\cup N))$ of $G$ will be respectively
included in the corresponding part of the partition $(V_1^{c'},\dots
,V_{h(c')}^{c'})$.  So, the two partitions will be equal, and using
that $N^-_{D_c}(b)\subseteq \bigcup \{V_i^c : i=0\mod 3 \}$ we have
{\bf (b)}. The proof of $(\star)$ runs by induction on $i$. Let start
with the case $i=2$.  As $r_c$ is the unique out-neighbour of $b$ in
$D_c$, we have $d^-_{D_{c'}}(b)= \{r_c\} \cup N$. In $D_{c'}$, the
vertex $b$ is the unique out-neighbour of $r_c$.  In $D_{c'}$, the
vertex $b$ is also the unique out-neighbour of the vertices of
$N$. Indeed, $N$ is an independent set of $G$ with no in-neighbour in
$D_c$. Moreover, when we perform the switch recoloring on $b$, we
invert all the arc leaving $N$ except the one with head $b$. So, we
have $V_2^{c'}= \{r_c\}\cup N$. Now, assume that for some integer
$i\in \{3, \dots ,h(c)+1\}$ the property $(\star)$ is true for all $j$
with $j< i$. Let $x$ be a vertex of $V_{i-1}^c \setminus (\{b\}\cup
N)$. The out-neighbours of $x$ in $D_c$ are in $\cup_{k=1}^{i-2}
V_{k}^c$. All these vertices are in $\cup_{k=1}^{i-1} V_{k}^{c'}$ by
induction hypothesis. The other possible out-neighbours of $x$ are
vertices of $\{b\} \cup N$ which are in $V_1^{c'} \cup V_2^{c'}$ as
previously shown. Therefore, the height of $x$ is at most $i$ in
$D_{c'}$. Let $y$ be an out-neighbour of $x$ in $D_c$ such that $y \in
V_{i-2}^c$. Since $y$ is not a source in $D_c$ we have $y\notin N$,
and as $y\neq b$ (otherwise, $x$ would be in $N$), we have by
induction hypothesis $y \in V_{i-1}^{c'}$. So, the height of $x$ is
exactly $i$ and $x\in V_{i}^{c'}$, which finally proves $(\star)$ and
{\bf (b)}.

In particular, {\bf (b)} directly implies {\bf (c)}. It also implies {\bf (d)}
easily.  Indeed, assume that $V_{h(c)}^c\subseteq N$, then as
$V_{h(c)}^c\cap N\neq \emptyset$ we have $h(c)=0 \mod 3$.  Thus, by
{\bf (b)}, we have $V_{h(c)+1}^{c'}=V_{h(c)}^c\setminus N =\emptyset$ 
and $V_{h(c)}^{c'}=V_{h(c)-1}^c\neq \emptyset$. So we obtain $h(c')=h(c)$.

To prove {\bf (e)}, as we know that $B_{c'}$ is a subset of
$V^{c'}_2$, which is $\{r_c\}\cup N$ by {\bf (b)}, we just have to
check that $r_c$ is the end of a certifying path in $D_{c'}$. Let
$P=x_4x_3x_2x_1$ be an oriented path of length $3$ in $D_c$ with
$x_1=r_c$ and $x_i\in V_i^c$ for $i=1,2,3,4$. Such a path exists since
the height of $c$ is at least $5$ by Lemma~\ref{lem:height-min}. As
$x_3$ is not a source in $D_c$, we have $x_3\notin N$. As $x_2$ is
certified in $c$ by the path $x_4x_3x_2$, we also have $x_2\neq
b$. So, the oriented path $x_3x_2x_1$ still exists in $D_{c'}$ and is
a certifying path in $D_{c'}$ for $r_c$. So, {\bf (e)} is proved.  \fpro

Before going on the main proof, let us establish the following
technical result.

\begin{claim}
\label{claim:technical}
Let $c$ be a nice coloring of $G$ and assume that there exists at
least one arc from $V^c_{h(c)}$ to $B_c$ in $D_c$. If $X\subseteq
V^c_{h(c)}\cup V^c_{h(c)-1}$ is an initial section of $D_c$, then
every vertex of $D_c\setminus (X\cup B_c)$ has a certifying
path lying in $D_c\setminus X$.
\end{claim}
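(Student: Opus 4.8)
The plan is to run a case analysis on a vertex $v\in V(G)\setminus(X\cup B_c)$ according to the level it occupies in the level partition $(V_1,\dots,V_k)$ of $D_c$, where $k=h(c)$ and $r$ denotes the unique sink of $D_c$. Two preliminary observations set everything up. First, since $\chi=3$, Lemma~\ref{lem:Bc} gives $B_c\subseteq V_2$, and each arc of $D_c$ decreases the level by an amount $\equiv 1\pmod 3$; hence the hypothesised arc from $V_k$ into $B_c$ forces $k\equiv 0\pmod 3$, which together with Lemma~\ref{lem:height-min} yields $k\ge 6$. Second, every vertex of $V_k$ is a source of $D_c$ (an in-neighbour would lie in a level above $k$), so no vertex that has an in-neighbour can lie in $V_k$. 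Finally, since $X\subseteq V_k\cup V_{k-1}$ with $k\equiv 0\pmod 3$, a vertex avoids $X$ whenever its level is $\le k-2$, and also whenever its level is $\equiv 1\pmod 3$ (because levels $k$ and $k-1$ are $\equiv 0$ and $\equiv 2\pmod 3$ respectively).

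For the easy cases: if $v$ lies in a level $i\ge 3$, I would take a forward certifying path $v\,w_1\,w_2$ with $w_1\in V_{i-1}$ an out-neighbour of $v$ and $w_2\in V_{i-2}$ an out-neighbour of $w_1$ (this is legitimate since $i-1\ge 2$, so $w_1$ is not the sink and has an out-neighbour). Here $w_2$ has level $\le k-2$, so $w_2\notin X$; $v\notin X$ by hypothesis; and $w_1\notin X$, since otherwise the arc $vw_1$ would enter the initial section $X$ while $v\notin X$. Thus this path lies in $D_c\setminus X$. If $v=r$, I would use that a longest oriented path of $D_c$ has exactly $k$ vertices lying in $k$ distinct levels, hence is of the form $z_k\to\cdots\to z_2\to z_1=r$ with $z_i\in V_i$; then $r\,z_2\,z_3$ is a colorful path starting at $r$ whose other two vertices lie in $V_2$ and $V_3$, which avoid $X$ because $k\ge 6$.

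The remaining case, $v\in V_2\setminus B_c$, is the real content. Since $v\notin B_c$, it has a certifying path; as the unique out-neighbour of $v$ is the sink $r$, this path cannot be forward, so it corresponds to an oriented path $y_3\to y_2\to v$ in $D_c$, giving the colorful path $v\,y_2\,y_3$. The level of $y_2$ is $\equiv 0\pmod 3$ and is $\ne k$ (as $y_2$ has the in-neighbour $y_3$, so it is not a source), hence it is $\le k-3$ and $y_2\notin X$; the level of $y_3$ is $\equiv 1\pmod 3$, hence distinct from $k\equiv 0$ and from $k-1\equiv 2\pmod 3$, so $y_3\notin X$. Thus this path too lies in $D_c\setminus X$, and since every vertex of $D_c$ falls into one of the three cases above, this completes the argument.

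The one delicate step is precisely this last case: a \emph{backward} certifying path is forced on $v$, and its middle vertex $y_2$ could \emph{a priori} sit in the top level $V_k$, which meets $X$. What rescues the argument is the combination of $V_k$ being a set of sources (so $y_2\notin V_k$) with the congruence $k\equiv 0\pmod 3$ — the sole place the arc-hypothesis enters — which also keeps $y_3$ out of the two top levels. Everything else is routine bookkeeping with the level partition and with the defining property of an initial section.
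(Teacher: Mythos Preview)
Your proof is correct and follows essentially the same approach as the paper's: both split into the cases $i\ge 3$ (forward certifying path stays in $D_c\setminus X$ by the initial-section property), $v=r_c$ (a backward path through $V_3^c$ and $V_2^c$, which miss $X$ since $h(c)\ge 5$), and $v\in V_2^c\setminus B_c$ (the backward certifying path $y_3\to y_2\to v$ avoids $X$ via the congruences $h_c(y_2)\equiv 0$, $h_c(y_3)\equiv 1\pmod 3$ together with $h(c)\equiv 0\pmod 3$ and the fact that $y_2$ is not a source). The only cosmetic differences are that you derive $k\ge 6$ rather than just $k\ge 5$, and you produce the backward path for $r$ from a longest oriented path rather than from an arbitrary vertex of $V_3^c$.
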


\pro As $X$ is an initial section of $D_c$, every forward certifying
path starting at a vertex of $D_c\setminus X$ lies in $D_c\setminus
X$.  Thus every vertex of $D_c\setminus (X\cup V_2^c\cup \{r_c\})$ has
a (forward) certifying path in $D_c\setminus X$. We have $X\cap
V_3^c=\emptyset$ since $h(c) \ge 5$ and $X\subseteq V^c_{h(c)}\cup
V^c_{h(c)-1}$. Thus there exists a path on three vertices starting in
$V_3^c$, ending on $r_c$ and lying in $D_c\setminus X$. This path is a
backward certifying path for $r_c$. To conclude, let $z$ be a vertex
of $V_2^c\setminus B_c$. As $z\notin B_c$, $z$ is certified in $D_c$
by a backward path $P=vwz$ on three vertices. By construction we have
$h_c(z)=2\mod 3$ and then $h_c(w)=0\mod 3$ and $h_c(v)=1\mod 3$.  Note
that, since there is an arc from $V^c_{h(c)}$ to $B_c$, we have
$h_c(y)=0\mod 3$ for every vertex $y$ of $V^c_{h(c)}$ and then
$h_c(y')=2\mod 3$ for every vertex $y'$ of $V^c_{h(c)-1}$. Since
$h_c(w)=0$, if $w \in X$ then $w$ must be in $V^c_{h(c)}$, which is
impossible since $w$ is not a source. So $w$ is not contained in $X$.
Moreover since $h_c(v)=1 \mod 3$ and $X$ only contains vertices $y$
satisfying $h_c(y) \in \{0,2\} \mod 3$, we have $v \notin X$. Thus
$vwz$ is a certifying backward path in $D_c\setminus (X\cup
B_c)$. \fpro

%

From now on, we consider a nice coloring of $G$ with maximal height
among all the nice colorings of $G$. This choice implies that if we
perform a switch coloring in $c$, case {\bf (c)} of
Claim~\ref{claim:switch} cannot occur.  
The next claim establishes some properties on the structure of
the last levels of $D_c$ for such nice colorings of $G$.

\begin{claim}
\label{claim:completebipartite}
Let $c$ be a nice coloring of $G$ with maximal height. 
Then the graph $G$ induces a complete bipartite
graph on $V^c_{h(c)}\cup B_c$ with bipartition $(V^c_{h(c)},B_c)$ and
also on $V^c_{h(c)}\cup V^c_{h(c)-1}$ with bipartition $(V^c_{h(c)},
V^c_{h(c)-1})$.
\end{claim}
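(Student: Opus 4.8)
The plan is to treat the two complete bipartite graphs separately; the first is immediate from what precedes, and the second is where the real work lies.

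\emph{The complete bipartite graph on $V^c_{h(c)}\cup B_c$.} Since $G$ is a counterexample we have $B_c\neq\emptyset$ (otherwise $c$ already witnesses Theorem~\ref{theo:3colorful} for $G$), and both $V^c_{h(c)}$ and $B_c\subseteq V^c_2$ are independent sets, being (parts of) levels of the partition; so it suffices to join every vertex of $V^c_{h(c)}$ to every $b\in B_c$. Fix $b\in B_c$ and let $c_b$ be the switch recoloring on $b$; by item (a) of Claim~\ref{claim:switch} it is a nice coloring, and since case (c) of Claim~\ref{claim:switch} cannot occur (as already observed, because $c$ has maximal height), we must be in case (d), whence $V^c_{h(c)}\setminus N^-_{D_c}(b)=\emptyset$, i.e.\ $V^c_{h(c)}\subseteq N^-_{D_c}(b)\subseteq N_G(b)$. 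I also record a by-product used below: every $v\in V^c_{h(c)}$ has $\deg_G(v)\ge 2$, since $v$ is adjacent both to (the nonempty) $B_c$ and, along a longest oriented path of $D_c$ starting at $v$, to a vertex of $V^c_{h(c)-1}$, and these neighbours are distinct as $h(c)-1\ge 4>2$.

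\emph{The complete bipartite graph on $V^c_{h(c)}\cup V^c_{h(c)-1}$.} Again both sets are independent, so assume for contradiction that $x\in V^c_{h(c)}$, $y\in V^c_{h(c)-1}$ and $\{x,y\}\notin E(G)$. Then $N^-_{D_c}(y)\subseteq V^c_{h(c)}$ and $x\notin N^-_{D_c}(y)$, so $X:=\{y\}\cup N^-_{D_c}(y)$ is an initial section of $D_c$, contained in $V^c_{h(c)-1}\cup V^c_{h(c)}$, with $x\notin X$. By the first part there is an arc of $D_c$ from $V^c_{h(c)}$ to $B_c$, so Claim~\ref{claim:technical} applies to $X$: after the initial recoloring $c'$ of $X$, every vertex outside $X\cup B_c$ still has a certifying path (the ones produced by Claim~\ref{claim:technical} avoid $X$ and hence are unaffected). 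It then suffices to certify in $c'$ also every vertex of $X\cup B_c$, for then $c'$ witnesses Theorem~\ref{theo:3colorful} for $G$, a contradiction.

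To do this, normalize $c(r_c)\equiv 0\bmod 3$; the arc from $V^c_{h(c)}$ to $B_c\subseteq V^c_2$ forces $h(c)-2\equiv 1\bmod 3$, so $h(c)\equiv 0\bmod 3$ and the vertices of $V^c_{h(c)}$, $V^c_{h(c)-1}$, $B_c$, and $r_c$ have colours $1,2,2,0$; after recoloring $X$ the vertices of $N^-_{D_c}(y)$ have colour $0$ and $y$ has colour $1$, while $x$ and all of $B_c$ keep colours $1$ and $2$. When $N^-_{D_c}(y)\neq\emptyset$, the first part (with the degree-$\ge 2$ fact) produces explicit colourful paths in $c'$: $b\,v\,y$ for $b\in B_c$ (any $v\in N^-_{D_c}(y)$), $v\,b\,x$ for $v\in N^-_{D_c}(y)$ (any $b\in B_c$, using $x\notin X$ so that $x$ still has colour $1$), and $y\,v\,w$ for $y$, where $v\in N^-_{D_c}(y)$ and $w\in N_G(v)\setminus\{y\}$ (which exists as $\deg_G(v)\ge 2$); one checks routinely that each of these uses three distinct vertices and three distinct colours, so all of $X\cup B_c$ is certified. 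The remaining, genuinely harder point is the degenerate case $N^-_{D_c}(y)=\emptyset$, i.e.\ $y$ is a source of $D_c$ adjacent to no vertex of $V^c_{h(c)}$; equivalently, one is left having to rule out that $V^c_{h(c)-1}$ contains a source of $D_c$. I expect to handle this by running the same scheme with the larger initial section $V^c_{h(c)}\cup\{y\}$ together with Claim~\ref{claim:technical}, plus a short case analysis that uses the maximality of $h(c)$ and Claim~\ref{claim:no-twin}; the obstacle is precisely that, in this case, recoloring contributes almost nothing near $B_c$ and $V^c_{h(c)}$, so that certifying these vertices forces one to exploit finer structure of the longest oriented paths of $D_c$ ending at $r_c$ (and whether $r_c\sim y$).
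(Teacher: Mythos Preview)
Your treatment of the first bipartite graph and of the non-degenerate subcase of the second (when $N^-_{D_c}(y)\neq\emptyset$) is correct and is essentially the paper's argument: both perform the initial recoloring of $\{y\}\cup N^-_{D_c}(y)$, invoke Claim~\ref{claim:technical} for the vertices outside $X\cup B_c$, and exhibit explicit colorful paths for the vertices of $X\cup B_c$.

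The genuine gap is the degenerate case $N^-_{D_c}(y)=\emptyset$, which you acknowledge but do not settle. Your proposed fallback (recoloring the larger initial section $V^c_{h(c)}\cup\{y\}$ and appealing to Claim~\ref{claim:no-twin}) does not obviously work: after that recoloring every vertex of $V^c_{h(c)}$ gets colour $0$, and since each such vertex is a source of $D_c$ all its $G$-neighbours keep colour $2$ (none of them is $y$, by hypothesis), so no vertex of $B_c$ can be certified along a path through $V^c_{h(c)}$; the alternative route through $r_c$ fails for the same reason. There is no evident short case analysis that repairs this.

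The paper avoids the difficulty by \emph{first} proving $N^-_{D_c}(y)\neq\emptyset$, and only then performing the recoloring you use. The key idea you are missing is to apply a switch recoloring on some $b\in B_c$ and then use the already-established first part of the claim on the \emph{new} nice coloring $c'$, which also has maximal height. By Claim~\ref{claim:switch}(b) and $h(c)\equiv 0\bmod 3$ one has $V^{c'}_{h(c')}=V^c_{h(c)-1}\ni y$, and by Claim~\ref{claim:switch}(e) the (nonempty) set $B_{c'}$ lies in $N^-_{D_c}(b)$. Applying the first part to $c'$ forces $y$ to be adjacent to every $z\in B_{c'}$. A short colour/parity check shows that no out-neighbour of $y$ in $D_c$ can lie in $N^-_{D_c}(b)$, so any such $z$ must be an \emph{in}-neighbour of $y$, necessarily in $V^c_{h(c)}$. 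Hence $N^-_{D_c}(y)\neq\emptyset$, and your non-degenerate argument then finishes the proof.
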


\pro We know that $B_c$, $V^c_{h(c)}$ and $V^c_{h(c)-1}$ form three
independent sets of $G$. By maximality of $h(c)$, for every vertex $b$
of $B_c$ a switch recoloring on $b$ produces case {\bf (d)} of
Claim~\ref{claim:switch}, so we have $V^c_{h(c)}\subseteq
N^-_{D_c}(b)$. Thus $G$ induces a complete bipartite graph on
$V^c_{h(c)}\cup B_c$ with bipartition $(V^c_{h(c)},B_c)$.

Now let us prove that $G$ induces a complete bipartite graph on
$V^c_{h(c)}\cup V^c_{h(c)-1}$ with bipartition $(V^c_{h(c)},
V^c_{h(c)-1})$. By contradiction assume that there exist vertices
$x\in V^c_{h(c)}$ and $y\in V^c_{h(c)-1}$ such that $\{x,y\}\notin
E(G)$. First we will prove that $y$ has an in-neighbour in
$V^c_{h(c)}$. Indeed, we consider a vertex $b$ in $B_c$ and remark
that $y$ has no out-neighbour which is also an in-neighbour of $b$.
Otherwise let $y'$ be such a vertex and denote by $i$ its level,
ie. $y'\in V^c_i$.  As $yy'\in E(D_c)$ we would have $h(c)-1=i-1\mod
3$ and as $y'b\in E(D_c)$ we would have $i=2-1\mod 3$. So we would get
$h(c)=2\mod 3$ a contradiction to $h(c)=0\mod 3$, previously noticed.
So the only neighbours of $y$ which are in-neighbours of $b$ in $D_c$
are in-neighbours of $y$ and lie in $V_{h(c)}^c$.  Now we apply a
switch recoloring on $b$ to obtain the coloring $c'$. By assumption
$G$ is not a counter-example to Theorem~\ref{theo:3colorful}, and then
$B_{c'}$ contains at least one vertex $z$. By
Claim~\ref{claim:switch}~{\bf (e)}, we know that $B_{c'}\subseteq
N^-_{D_c}(b)$.  Moreover, Claim~\ref{claim:switch}~{\bf (b)} ensures
that $V_{h(c')}^{c'}=V_{h(c)-1}^{c}$ (because $h(c)=0\mod 3$). Thus
$y$ belongs to $V_{h(c')}^{c'}$, and by maximality of $h(c)=h(c')$ the
first part of the claim implies that $yz$ is an arc of $D_{c'}$.  Thus
$z$ was an in-neighbour of $b$ in $D_c$ and is a neighbour of $y$. By
the previous remark we know that $z\in V_{h(c)}^c$ and $z$ is an
in-neighbour of $y$ in $D_c$.\\
%
%
Now, in $D_c$, we know that $y$ has at least one non neighbour in
$V_{h(c)}^{c}$ (the vertex $x$) and also at least one in-neighbour in
$V_{h(c)}^{c}$.  We denote by $Y$ the set $N^-_{D_c}(y)$, which is
contained in $V_{h(c)}^c$, and apply an initial recoloring on the
initial section $\{y\} \cup Y$ to obtain the coloring $c'$.  Let us
prove that every vertex of $G$ has a certifying path in $D_{c'}$. First,
every vertex of $V_{h(c)}^{c}\cup \{y\}\cup B_c =
(V_{h(c)}^{c}\setminus Y) \cup B_c \cup Y \cup \{y\}$ has a certifying
path.  Indeed for every vertex $z\in V_{h(c)}^{c}\setminus Y$ (which
is non empty since it contains $x$), for every vertex $b\in B_c$ and
for every vertex $z'$ of $Y$ (which is non empty by the previous
paragraph), the oriented path $zbz'y$ exists in $D_{c'}$. Moreover,
the oriented graph $D_{c}\setminus ( Y\cup \{y\})$ is unchanged by the
recoloring. So it is possible to apply Claim~\ref{claim:technical}
with $X=Y\cup \{y\}$ to conclude that every vertex of $D_{c}\setminus (
Y\cup \{y\} \cup B_c)$ has also a certifying path in $D_{c}\setminus (
Y\cup \{y\})$=$D_{c'}\setminus ( Y\cup \{y\})$.\\ In all, every vertex
of $G$ has a certifying path in $D_{c'}$, a contradiction to the fact that
$G$ is a counter-example to Theorem~\ref{theo:3colorful}.
\fpro

The final claim gives more precision on the in-neighbourhood of the
vertices of $B_c$ in $D_c$.

\begin{claim}
\label{claim:onlylastlevel}
For a coloring $c$ of $G$ with maximal height, every vertex $b$ of
$B_c$ satisfies $N_{D_c}^-(b)= V^c_{h(c)}$.
\end{claim}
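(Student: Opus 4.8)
The plan is to argue by contradiction, using iterated switch recolorings, in the same spirit as the proof of Claim~\ref{claim:completebipartite}.

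Recall first that $V^c_{h(c)}\subseteq N^-_{D_c}(b)$ (this was noted inside the proof of Claim~\ref{claim:completebipartite}), so it remains to prove the inclusion $N^-_{D_c}(b)\subseteq V^c_{h(c)}$. Write $h=h(c)$ and suppose, for contradiction, that some $b\in B_c$ has an in-neighbour $a\in N^-_{D_c}(b)\setminus V^c_h$. Since every in-neighbour of a vertex of $B_c$ is a source of $D_c$ lying in a level of index $\equiv 0\bmod 3$, we have $a\in V^c_j$ with $j\equiv 0\bmod 3$ and $3\le j\le h-3$.

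I would then perform the switch recoloring on $b$, obtaining by Claim~\ref{claim:switch} a nice coloring $c_1$ of maximal height $h$ (case \textbf{(c)} being excluded, so case \textbf{(d)} applies). The key step is to show that $B_{c_1}\subseteq V^c_h$: by Lemma~\ref{lem:Bc} and Claim~\ref{claim:switch}\,\textbf{(e)}, every $z\in B_{c_1}$ lies in $V_2^{c_1}$ and is a source of $D_c$ in a level of index $\equiv 0\bmod 3$; by Claim~\ref{claim:completebipartite} applied to the maximal-height coloring $c_1$, $z$ is adjacent in $G$ to every vertex of its top level $V^{c_1}_{h}$, which by Claim~\ref{claim:switch}\,\textbf{(b)} equals $V^c_{h-1}$; since $V^c_{h-1}$ sits at a level of index $h-1\equiv 2\bmod 3$ and $z$ is a source of $D_c$, the parity/orientation count on $D_c$ forces $z$ to lie in the top level $V^c_h$. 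Note also that $a$ now sits at level $2$ in $c_1$ and, being outside $V^c_h$, is not in $B_{c_1}$, so this one switch does not "re-create'' $a$ as a bad in-neighbour.

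Now I would iterate: given a maximal-height nice coloring $c_k$ with a nonempty bad set $B_{c_k}$ (nonempty since $G$ is a counterexample), pick $b_k\in B_{c_k}$, switch on it to obtain $c_{k+1}$, and run the same argument to get $B_{c_{k+1}}\subseteq V^{c_k}_{h}$, the top level of $c_k$. By Claim~\ref{claim:switch}\,\textbf{(b)}, passing from $c_k$ to $c_{k+1}$ shifts every level of index $\ge 3$ up by one (intersected with $N^-_{D_{c_k}}(b_k)$ only at indices $\equiv 1\bmod 3$); unwinding this from $c=c_0$, one checks by induction that for $i$ large the level of $c_k$ of index $i$ is contained in $V^c_{i-k}$, so that $B_{c_{k+1}}\subseteq V^c_{h-k}$, i.e.\ $B_{c_k}\subseteq V^c_{h-k+1}$. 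Thus $B_{c_1},B_{c_2},\dots$ lie in pairwise distinct levels of $D_c$ and are all nonempty, so after at most $h$ switches the next bad set would have to lie in ``$V^c_0$'', i.e.\ be empty — a contradiction. Hence no such $a$ exists and $N^-_{D_c}(b)=V^c_h$.

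The main obstacle is the level-partition bookkeeping in the iteration: Claim~\ref{claim:switch}\,\textbf{(b)} produces a set-difference term at levels of index $\equiv 1\bmod 3$, so proving cleanly that the top level of $c_k$ is contained in $V^c_{h-k}$, and in particular controlling precisely what happens in the last one or two iterations (when the levels of $D_c$ run out and the top levels can no longer be empty since $h(c_k)=h$), requires a careful induction. One must also treat separately the small cases where the generic parity arguments degenerate (for instance $j=3$, so that $a$ could itself be the middle vertex of some backward certifying path, or $|V^c_h|=1$).
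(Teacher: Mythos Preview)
Your argument has a genuine gap, and in fact the contradiction hypothesis is never used. Look at the chain of implications you write down: from ``$z\in B_{c_{k+1}}$'' you deduce ``$z$ is a source of $D_{c_k}$ adjacent to all of $V^{c_{k+1}}_h=V^{c_k}_{h-1}$'', hence ``$z\in V^{c_k}_h$''. This is correct and uses only Claim~\ref{claim:switch} and Claim~\ref{claim:completebipartite}, not the existence of $a$. Then you try to trace $V^{c_k}_h$ back to a single level $V^c_{h-k}$ of the original partition and conclude that the nonempty sets $B_{c_1},B_{c_2},\dots$ sit in pairwise distinct levels of $D_c$, which would force one of them to be empty. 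But Claim~\ref{claim:switch}\,\textbf{(b)} only gives $V^{c'}_i$ in terms of $V^c_{i-1}$ for $i\ge 3$; at $i=2$ one has $V^{c'}_2=\{r_c\}\cup N^-_{D_c}(b)$, which is \emph{not} contained in a single level of $D_c$. Concretely, unwinding gives $V^{c_{h-1}}_h\subseteq V^{c_1}_2=\{r_c\}\cup N^-_{D_c}(b)$, so $B_{c_h}$ is only known to lie in $\{r_c\}\cup N^-_{D_c}(b)$, not in ``$V^c_1$''. From that point on the sets $B_{c_k}$ can (and in the cycle case do) recycle through the same vertices, so no contradiction follows. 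Since the vertex $a$ never enters the argument, there is no mechanism by which its existence could rescue the induction at this step.

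The paper's proof is much shorter and uses the extra in-neighbour directly. One performs a \emph{single} initial recoloring of $X=V^c_{h}\cup V^c_{h-1}$ and checks that every vertex becomes certified: the complete-bipartite structure of Claim~\ref{claim:completebipartite} certifies $B_c$ and $V^c_{h-1}$, Claim~\ref{claim:technical} with this $X$ certifies everything outside $X\cup B_c$, and the assumed in-neighbour $u\notin V^c_h$ of $b$ is exactly what supplies the missing backward certifying path $ubz$ for each $z\in V^c_h$. This is where the hypothesis is genuinely needed, and it is the idea your attempt is missing.
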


\pro Let $b$ be a vertex of $B_c$. By
Claim~\ref{claim:completebipartite} we know that $V_{h(c)}^c\subseteq
N^-_G(b)$. So assume that $b$ has an in-neighbour $u \notin
V_{h(c)}^c$, and consider the initial recoloring of the initial
section $V_{h(c)}^c \cup V_{h(c)-1}^c$ of $D_c$. We denote by $c'$ the
obtained coloring of $G$, and let us prove that there is a certifying
path in $c'$ for every vertex of the graph $G$. Let $z,z',b'$ be
respectively vertices of $V_{h(c)}^c$, $V_{h(c)-1}^c$ and $B_c$. By
Claim~\ref{claim:completebipartite} both $(V_{h(c)}^c,V_{h(c)-1}^c)$
and $(B_c,V_{h(c)}^c)$ induce complete bipartite graphs in $G$, and
then $b'zz'$ is an oriented path in $D_{c'}$. So, it is a backward
certifying path for $z'$ and a forward certifying path of
$b'$. Moreover, $ubz$ is also an oriented path of $D_{c'}$, and then a
certifying backward path for $z$. So every vertex of $B_c\cup
V_{h(c)}^c\cup V_{h(c)-1}^c$ has a certifying path in $D_{c'}$.  To
conclude we notice that the oriented graph $D_c\setminus (V_{h(c)}^c
\cup V_{h(c)-1}^c)$ is unchanged by the recoloring.  So we apply
Claim~\ref{claim:technical} with $X=V_{h(c)}^c \cup V_{h(c)-1}^c$
and conclude that every vertex of $D_c\setminus (V_{h(c)}^c
\cup V_{h(c)-1}^c\cup B_c)$ has a certifying path in $D_c\setminus (V_{h(c)}^c
\cup V_{h(c)-1}^c)=D_{c'}\setminus (V_{h(c)}^c
\cup V_{h(c)-1}^c)$.\\
In all, every vertex of $G$ has
a certifying path in $D_{c'}$, a contradiction to the fact that $G$ is a
counter-example to Theorem~\ref{theo:3colorful}.
\fpro

Now, it is possible to conclude the proof of
Theorem~\ref{theo:3colorful} by applying repeated switch colorings on
vertices of $B_c$. More precisely, let $c$ be a coloring of $G$ with
maximal height. By Claim~\ref{claim:onlylastlevel}, the
in-neighbourhood of every vertex $b$ of $B_c$ is exactly $V^c_{h(c)}$,
and its out-neighbourhood is $\{r_c\}$. So, as $G$ has no twins by
Claim~\ref{claim:no-twin}, it means that $B_c$ contains only one
vertex which is linked to $r_c$ and to all the vertices of
$V_{h(c)}^c$. Then, we consider the following partition of $G$: ${\cal
  U}_c=(U^c_1,\dots ,U^c_{h(c)+1})=(V_1^c, B_c, V^c_{h(c)},
V_{h(c)-1}^c,\dots , V_3^c, V_2^c\setminus B^c)$. By the previous
remark, we have $|U^c_1|=|U^c_2|=1$ and the neighbourhood of the
unique vertex of $U^c_2$ is included in $U^c_1\cup U^c_3$. Now, if we
apply a switch coloring on the unique vertex of $B_c$ and obtain a
coloring $c'$, properties {\bf (b)} and {\bf (d)} of
Claim~\ref{claim:switch} imply that $U^{c'}_i=U^{c}_{i+1}$ for
$i=1,\dots ,h(c)$ and $U^{c'}_{h(c)+1}=U^{c}_{1}$. As $c'$ is also a
nice coloring of $G$ with maximal height, we also have that
$|U^{c'}_2|=|U^c_3|=1$ and that the neighbourhood of the unique vertex
of $U^{c'}_2=U^c_3$ is $U^{c'}_1\cup U^{c'}_3=U^c_4\cup U^c_2$. By
repeating switch colorings, a direct induction shows that, for every
$i$, $U_i^{c}$ contains exactly one vertex, and the neighbourhood of
this vertex is $U_{i-1}^c\cup U_{i+1}^c$. So, the graph $G$ is a
cycle, a contradiction to Lemma~\ref{lem:cycleok}.

\section{Concluding remarks}
\label{sec:conclusion}

Notice that we can derive a polynomial time algorithm from the
proof of Theorem~\ref{theo:3colorful}.  Indeed, one can verify that
all the proofs of the lemmas and the claims provide algorithms in
order to improve $B^c$ at each step or find a coloring for which
every vertex admits a colorful path. So, given a 3-chromatic graph
$G$ different from $C_7$ and a $3$-coloring of $G$, we can find in
polynomial time a $3$-coloring of $G$ for which every vertex of $G$
is the beginning of a colorful path.

\medskip

Besides, it seems that the methods used in the proof of
Theorem~\ref{theo:3colorful} cannot be immediately generalized to
graphs with higher chromatic number. We can nevertheless
state the following weaker result for $4$-chromatic graphs.

\begin{lemma}\label{lem:C4}
 Every $4$-chromatic graph $G$ containing a cycle of length $4$ admits
 a $4$-coloring such that every vertex of $G$ is the beginning of a
 colorful path.
\end{lemma}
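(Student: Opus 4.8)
The plan is to reduce the lemma to the existence of an oriented cycle in the auxiliary digraph. First observe that if $G$ contains a clique of size $4 = \chi(G)$ then we are done, since (as recalled in the preliminaries) such a clique forces an oriented cycle in $D_c$ for every $4$-coloring $c$, and hence $G$ satisfies Conjecture~\ref{conj:colorful}. So we may assume $G$ is $K_4$-free, and write the prescribed $4$-cycle as $v_1v_2v_3v_4v_1$. Since $\{v_1,v_2,v_3,v_4\}$ does not induce a $K_4$, at least one pair of opposite vertices of the $4$-cycle is non-adjacent, say $v_1\not\sim v_3$ (and, symmetrically, we may also arrange $v_2\not\sim v_4$ whenever we need it).

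The key claim I would isolate is that $G$ admits a proper $4$-coloring assigning four distinct colors to $v_1,v_2,v_3,v_4$. Granting this, compose such a coloring with the permutation of $\{1,2,3,4\}$ sending $(c(v_1),c(v_2),c(v_3),c(v_4))$ to $(1,2,3,4)$ (well defined since these four values are distinct); the result is still a proper $4$-coloring, because permuting colors preserves properness, and now $v_1v_2v_3v_4v_1$ is an oriented $4$-cycle of the associated digraph, as consecutive vertices along it differ by $+1 \bmod 4$. An oriented cycle of $D_c$ has length a multiple of $\chi = 4$, hence length at least $\chi$, so the argument given right after Lemma~\ref{lem:reachable-set} applies verbatim: apply Lemma~\ref{lem:reachable-set} with $X$ the vertex set of this cycle, then extend each of the resulting oriented paths around the cycle, obtaining a $4$-coloring in which every vertex of $G$ begins a colorful path. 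Thus the lemma reduces to the claim.

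To prove the claim I would start from any proper $4$-coloring $c$ of $G$ and improve it by Kempe exchanges whenever the $4$-cycle misses a color. Since each $v_i$ is adjacent to both of its cyclic neighbors, the only obstruction is that two opposite vertices carry the same color; say $c(v_1)=c(v_3)=a$ (which forces $v_1\not\sim v_3$), the case $c(v_2)=c(v_4)$ being symmetric, and let $d$ be a color absent from $\{v_1,v_2,v_3,v_4\}$. If $v_1$ has no neighbor of color $d$, recolor $v_1$ with $d$ and the $4$-cycle becomes rainbow. Otherwise, perform the Kempe exchange on colors $\{a,d\}$ in the component of $v_1$: this turns $c(v_1)$ into $d$, and, provided $v_3$ does not lie in the same $\{a,d\}$-Kempe component, it leaves $c(v_3)=a$, so again $v_1,v_2,v_3,v_4$ receive four distinct colors.

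The remaining situation, which I expect to be the real obstacle, is the one in which no such move ever succeeds: equivalently, every proper $4$-coloring of $G$ leaves two opposite vertices of the $4$-cycle with equal colors, i.e. $\chi(G+v_1v_3+v_2v_4)\ge 5$, and moreover whichever opposite pair survives with distinct colors in a given coloring is trapped in a common Kempe component. Here I would try to exploit the rigidity such a configuration imposes: the forced $\{a,d\}$-Kempe path between $v_1$ and $v_3$, together with the two edges $v_1v_2$ and $v_2v_3$ (and the symmetric data around $v_4$), yields short even cycles through $v_2$ and through $v_4$, from which I would attempt to extract either a second cycle of length four that can be made rainbow, or a vertex completing $\{v_1,v_2,v_3,v_4\}$ to a $K_4$ minus an edge, in each case falling back on an already handled case. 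Making this last step precise is where the main work of the proof lies.
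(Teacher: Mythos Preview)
Your reduction---make the $4$-cycle rainbow, then it is an oriented cycle of $D_c$ and Lemma~\ref{lem:reachable-set} finishes---is the same endgame the paper uses, but your proof of the key claim has a genuine gap. You explicitly leave unresolved the case where the $\{a,d\}$-Kempe component of $v_1$ also contains $v_3$, and the plan you sketch (hunt for another $C_4$ or a near-$K_4$ inside the Kempe path) is not an argument: nothing you have established forces such a substructure to exist, and a Kempe path between $v_1$ and $v_3$ can be arbitrarily long and otherwise unrelated to $v_2,v_4$. Note also that your intermediate claim (``some $4$-coloring makes $v_1v_2v_3v_4$ rainbow'') is strictly stronger than what the lemma needs, and the paper neither proves nor uses it.

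The paper avoids this obstacle by replacing Kempe exchanges with the initial recolorings of $D_c$ already set up in the preliminaries. For a vertex $x$, the set $S^-_{D_c}(x)$ of vertices with an oriented path in $D_c$ ending at $x$ is an initial section; recoloring it shifts $c(x)$ down by one while fixing everything outside. In the two-color case ($c(x_1)=c(x_3)$, $c(x_2)=c(x_4)$), one looks at $S^-_{D_c}(x_1)$: if $x_2$ or $x_4$ is in it, the arc $x_1x_2$ (resp.\ $x_1x_4$) closes an oriented cycle; if $x_3\notin S^-_{D_c}(x_1)$, the initial recoloring changes only $c(x_1)$ among the four and raises the color count on the cycle to three; and the analogue of your bad case, $x_3\in S^-_{D_c}(x_1)$, \emph{immediately} gives an oriented cycle, since by the symmetry $x_1\leftrightarrow x_3$ one also has $x_1\in S^-_{D_c}(x_3)$. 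The three-color case is handled the same way with $S^-_{D_c}(x_2)$ and the symmetry $x_2\leftrightarrow x_4$. So the paper's iteration is ``either increase the number of colors on the cycle, or $D_c$ already has an oriented cycle'', and the second branch is exactly what absorbs the configuration that stalls your Kempe argument.
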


\begin{proof}
Let $c$ be a $4$-coloring of a 4-chromatic graph $G$. Denote by
$H=x_1x_2x_3x_4x_1$ a (non necessarily induced) cycle of length 4 of
$G$.  Recall that Lemma~\ref{lem:reachable-set} ensures that the
conclusion holds if $D_c$ contains a circuit. If the four colors
appear on $H$, then up to a permutation on colors, we can assume that
$c(x_i)=i$ for $i=1,2,3,4$. So, $H$ appear as an oriented cycle in
$D_c$ and we are done. So we can assume that $H$ is colored with two
or three colors. Let us prove that in both cases, either there is a
oriented cycle in $D_c$ or the number of colors appearing on $H$ in a
4-coloring of $G$ can be increased.

 First assume that only two colors of $c$ appear in $H$. Up to a
 permutation on colors, we can assume that $x_1,x_3$ are colored
 with $1$ and that $x_2,x_4$ are colored with $2$. For a vertex $y$
 of $G$, we denote by $S_{D_c}^-(y)$ the set of vertices of $G$ which
 are the beginning of an oriented path in $D_c$ with end $y$. Notice
 that $S_{D_c}^-(y)$ is always an initial section of $D_c$. Consider
 the set $S_{D_c}^-(x_1)$. If it contains $x_2$ or $x_4$, it means
 that $D_c$ has an oriented cycle. So, we assume that $x_2\notin
 S_{D_c}^-(x_1)$ and $x_4\notin S_{D_c}^-(x_1)$. If $x_3\notin
 S_{D_c}^-(x_1)$, then we perform an initial recoloring on
 $S_{D_c}^-(x_1)$ and in the resulting coloring, $H$ receives three
 colors.  Indeed $x_2$, $x_3$ and $x_4$ do not belong to
 $S_{D_c}^-(x_1)$ and still have the same color (i.e. respectively 2,
 1 and 2), and $x_1$ receive now color 4. Otherwise there is a
 directed path in $D_c$ from $x_3 $ to $x_1$. But, by symmetry there
 also exists a directed path from $x_1$ to $x_3$, which provides an
 oriented cycle in $D_c$.

Assume now that $H$ receives three colors by $c$. Up to permutation on
colors, we can assume that $x_1$ is colored with $1$, $x_2$ and $x_4$
are colored with $2$ and $x_3$ is colored with $3$.  Consider the set
$S_{D_c}^-(x_2)$. If it contains $x_3$, then $D_c$ has an oriented
cycle. So we assume that $x_3\notin S_{D_c}^-(x_2)$. If
$S_{D_c}^-(x_2)$ does not contain $x_4$, then we apply an initial
recoloring on it. As $x_1\in S_{D_c}^-(x_2)$, $x_1$, $x_2$, $x_3$ and
$x_4$ respectively receive colors 4, 1, 3, 2, and we have a coloring
of $G$ with four different colors on $H$. So, assume that $x_4\in
S_{D_c}^-(x_2)$. It means that there exists an oriented path from
$x_4$ to $x_2$ in $D_c$. By symmetry, there exists also a directed
path from $x_2$ to $x_4$ in $D_c$ and $D_c$ contains an oriented
cycle.
\end{proof}

In particular, if a $3$-chromatic graph contains a cycle of length
three, it appears as an oriented cycle in $D_c$ for any 3-coloring
$c$, and then Lemma~\ref{lem:reachable-set} ensures that
Conjecture~\ref{conj:colorful} holds. If a $4$-chromatic graph
contains a cycle of length four, then Lemma~\ref{lem:C4} ensures that
Conjecture~\ref{conj:colorful} holds. In both cases the proofs are
simple. It raises the following  natural question.

\begin{problem} 
Does Conjecture~\ref{conj:colorful} hold for $k$-chromatic connected
graphs containing a cycle of length $k$?
\end{problem}

To conclude, note also that in the case of $2$ and $3$-chromatic
graphs, every colorful path is either forward or backward (recall
that forward (resp.  backward) means that the color of the $i$-th
vertex is the color of the $(i-1)$-th vertex plus (resp. minus)
one). When $D_c$ contains an oriented cycle or in Lemma~\ref{lem:C4},
we obtain certifying paths which are all forward or
backward. In~\cite{ATT11}, M. Alishahi, A. Taherkhani and C. Thomassen provides
paths with $\chi-1$ vertices intersecting $\chi-1$ colors which are
union of at most $2$ increasing paths. It raises the following
strengthened conjecture.

\begin{conjecture}
Every $k$-chromatic connected graph different from $C_7$ admits a
$k$-coloring such that every vertex is the end of a forward or
backward certifying path.
\end{conjecture}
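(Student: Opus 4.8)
This statement is a conjecture, so what follows is a line of attack together with the point at which it stalls, rather than a complete plan. First, the cases $k\le 3$ are already done: in a proper colouring using at most three colours any two distinct colours differ by $\pm 1\pmod k$, so the colour sequence of a colourful path $x_1\cdots x_k$ is forced to be monotone, i.e.\ \emph{every} colourful path is forward or backward. Hence Conjecture~\ref{conj:colorful} and the strengthened conjecture coincide for $k\le 3$, and Theorem~\ref{theo:3colorful} (together with the trivial cases $k\le 2$) settles them. The situations in which some $k$-colouring $c$ already makes $D_c$ non-acyclic are also automatic: the argument following Lemma~\ref{lem:reachable-set} produces, from such a $c$, a colouring $c'$ in which every vertex is the start of an oriented $k$-vertex path of $D_{c'}$, that is, the beginning of a \emph{forward} colourful path; the same remark covers Lemma~\ref{lem:C4}. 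So one may assume $k\ge 4$ and that no $k$-colouring of $G$ makes $D_c$ non-acyclic.

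In the remaining case I would translate everything into $D_c$, exactly as in Section~\ref{sec:proof}. Reversing a path exchanges ``forward'' and ``backward'', so ``$v$ is the end of a forward or backward certifying path'' is equivalent to ``$v$ is the start of an oriented $k$-vertex path of $D_c$, or the end of one''. If $h^*_c(v)$ denotes the number of vertices of a longest oriented path of $D_c$ \emph{ending} at $v$, and we recall that $h_c(v)$ is the number of vertices of a longest oriented path \emph{starting} at $v$, the goal becomes: find a $k$-colouring $c$ of $G$ with $h_c(v)\ge k$ or $h^*_c(v)\ge k$ for every vertex $v$. The plan is then to mimic the proof of Theorem~\ref{theo:3colorful}: take $c$ a nice colouring (Corollary~\ref{col:unique-sink}) extremal for $h(c)$, use Lemma~\ref{lem:height-min} and an analogue of Lemma~\ref{lem:Bc} to confine the ``bad'' vertices to the middle levels $V^c_2\cup\cdots\cup V^c_{k-1}$, and eliminate them one by one by local switch-type recolourings — initial/terminal recolourings (Lemma~\ref{lem:initialrecoloring}) supported on a bad vertex together with a controlled piece of its in-neighbourhood — showing by a case analysis, as in Claims~\ref{claim:switch}--\ref{claim:onlylastlevel}, that each such move either completes a monotone colourful path through the bad vertex or forces an extremal colouring to be so rigid that $G$ must be a cycle, which is handled by Lemma~\ref{lem:cycleok}.

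The hard part, and the reason this remains open, is that last step. For $k=3$ there is a single middle level and every edge of $G$ is an arc of $D_c$, so the switch recolouring controls the entire level partition; for $k\ge 4$ neither holds — a bad vertex can have neighbours in several levels and several colour classes — and reversing the arcs out of $\{b\}\cup N^-_{D_c}(b)$ no longer determines $D_{c'}$. One also cannot borrow the $(k-1)$-vertex rainbow paths of Alishahi--Taherkhani--Thomassen~\cite{ATT11}, since those are unions of up to two monotone pieces and are therefore exactly the non-monotone paths the strengthened conjecture forbids, so the extra slack they provide is unavailable. In short, the obstacle is to control heights \emph{and} co-heights simultaneously under recolourings, and the present techniques do not seem to suffice; note in particular that the strengthened conjecture is at least as strong as Conjecture~\ref{conj:colorful} itself, which is open for $k\ge 4$ in general.
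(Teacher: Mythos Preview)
The statement is a \emph{conjecture}, and the paper offers no proof of it; it appears in the concluding remarks only with the motivating observations that for $k\le 3$ every colourful path is automatically monotone, and that the oriented-cycle argument after Lemma~\ref{lem:reachable-set} and the proof of Lemma~\ref{lem:C4} already yield forward certifying paths. Your write-up correctly recognises this, reproduces exactly those partial cases, and then gives a sensible informal account of why the switch-recolouring machinery of Section~\ref{sec:proof} does not obviously extend to $k\ge 4$ --- which is more than the paper itself says. So there is nothing to compare: you have matched the paper's content on this item and appropriately flagged the rest as open.
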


\vspace{11pt}

\end{document}